\newcommand{\C}{\mathbb C}
\newcommand{\R}{\mathbb R}
\newcommand{\0}{\bf 0}
\newcommand{\ip}[2]{\langle#1,#2\rangle}
\def \Sum{\displaystyle\sum}
\newcommand{\norm}[1]{\left\| #1 \right\|}
\DeclareMathOperator*{\argmin}{arg\,min}
\newtheorem{thm}{Theorem}
\newtheorem{lemma}{Lemma}
\newtheorem{prop}{Proposition}
\newtheorem{cor}{Corollary}
\theoremstyle{remark}
\newtheorem{rem}{Remark}
\theoremstyle{definition}
\newtheorem{deft}{Definition}
\newtheorem{example}{Example}
\newtheorem{conjecture}{Conjecture}
\begin{document}

\title{Optimization methods for frame conditioning and application to graph Laplacian scaling}

\author{Radu Balan}

\address{Radu Balan\\
Department of Mathematics\\
University of Maryland\\
College Park, MD, 20742 USA}

\email{rvbalan@cscamm.umd.edu}

\author{Mathew Begu\'e}

\address{Mathew Begu\'e\\
Department of Mathematics\\
University of Maryland\\
College Park, MD, 20742 USA}

\email{begue@math.umd.edu}

\author{Chae Clark}

\address{Chae Clark\\
Department of Mathematics\\
University of Maryland\\
College Park, MD, 20742 USA}

\email{cclark18@math.umd.edu}

\author{Kasso A.~Okoudjou}

\address{Kasso A.~Okoudjou\\
Department of Mathematics\\
University of Maryland\\
College Park, MD, 20742 USA}

\email{kasso@math.umd.edu}

\subjclass[2000]{Primary 42C15; Secondary 65F35, 90C22}

\date{\today}

\keywords{Parseval frames, Scalable frames, frame conditioning, graph reweighting}

\begin{abstract} 
A frame is scalable if each of its vectors can be rescaled in such a way that the resulting set becomes a Parseval frame. In this paper, we consider four different optimization problems for determining if a frame is scalable. We offer some algorithms to solve these problems. We then apply  and extend our methods to the problem of reweighing (finite) graph so as to minimize the condition number of the resulting Laplacian. 
\end{abstract}

\maketitle \pagestyle{myheadings} \thispagestyle{plain}
\markboth{R.BALAN, M.BEGU\'E, C. CLARK, AND K.OKOUDJOU}{OPTIMIZATION METHODS FOR FRAME CONDITIONING}

\section{Introduction}
The notion of scalable frame has been investigated in recent years \cite{CKLMNarayanS14, KOscalable2014, cahill2013, KOpreconditioning}, where the focus was more on characterizing frames whose vectors can be rescaled resulting in a tight frame. For completeness, we recall that a set of vectors $F=\{f_i\}_{i=1}^M$ in some (finite dimensional) Hilbert space $\mathcal H$  is a frame for $\mathcal{H}$ if there exist two constants $0<A\leq B<\infty$ such that 

$$A\|x\|^2 \leq \sum_{i=i}^M |\ip{x}{f_i}|^2 \leq B\|x\|^2$$ for all $x\in \mathcal{H}.$ When $A=B$ the frame is said to be tight and if in addition, $A=B=1$ it is termed a Parseval frame. When $F=\{f_i\}_{i=1}^M$ is a frame, we shall abuse notations and denote by $F$ again, the $n\times M$ matrix whose $i^{th}$ column is $f_i$, and where $n$ is the dimension of $\mathcal{H}$. Using this notation, the frame operator is the $n\times n$ matrix $S=FF^{*}$ where $F^{*}$ is the adjoint of $F$. It is a folklore to note that $F$ is a frame if and only if $S$ is a positive definite operator and the optimal lower frame bound, $A$, coincides with the lowest eigenvalue of  $S$ while  the optimal upper frame bound, $B$, equals the largest eigenvalue of $S$. We refer to \cite{CasKut2013, CasazzaFramesChapter, Okou2016} for more details on frame theory. 

It is apparent that tight frames are optimal frames in the sense that the condition number of their frame operator is $1$. We recall that, the \emph{condition number} of a matrix $A$, denoted $\kappa(A)$, is defined as the ratio of the largest singular value and the smallest singular value of $A$, i.e., $\kappa(A)=\sigma_{\max}(A)/\sigma_{\min}(A)$.  By analogy, for a frame in a Hilbert space $\{f_i\}_{i=1}^M\subseteq \mathcal H$ with optimal frame bounds $A$ and $B$, we define the condition number of the frame to be the condition number of its associated frame operator $\kappa(\{f_i\}):=\kappa(S)=B/A$.  In particular, if a frame is Parseval then its condition number equals $1$. In fact, a frame is tight if and only if its condition number is $1$.
Scalable frames were precisely introduced to turn a non optimal (non-tight) frame into an optimal one, by just rescaling the length of each frame vector. More precisely, 

\begin{deft}[{\cite[Definition 2.1]{KOFTscalable}}]
A frame $\{f_i\}_{i=1}^M$ in some Hilbert space $\mathcal H$ is called a \emph{scalable frame} if there exist nonnegative numbers $s_1,...,s_M$ such that $\{s_i f_i\}_{i=1}^M$ is a Parseval frame for $\mathcal H$.
\end{deft}

It follows from the definition that a frame $\{f_i\}_{i=1}^M$ is scalable if and only if there exist scalars $s_i\geq 0$ so that 
$$\kappa\left(\Sum_{i=1}^M s_i^2 f_i f_i^*\right)=1.$$ To date  various equivalent characterizations of  scalable frames have been proved and attempts to measure how close to scalable a non-scalable frame is  have been offered \cite{KOscalable2014, cahill2013, KOpreconditioning, KOPrecondPF2016}.  In particular, if a frame is not scalable, then one can naturally measure how ``not scalable" the frame is by measuring
\begin{equation}\label{distancetoI}
\min_{s_i\geq 0} \norm{I_n-\Sum_{i=1}^M s_i^2 f_i f_i^*}_F,
\end{equation}
as proposed in \cite{ChenKOPWscalable}, where $\norm{\cdot}_F$ denotes the Frobenius norm of a matrix.  Other measures of scalability were also proposed by the same authors. However, it is not clear that, when a frame is not scalable, an optimal solution to~\eqref{distancetoI} yields a frame $\{s_if_i\}$ that is as best conditioned as possible. Recently,  the relationship between the solution to this problem and the condition number of a frame has been investigated in \cite{CasazzaChen2015}. In particular, Casazza and Chen show that the problem of minimizing the condition number of a  scaled frame 

\begin{equation}\label{mincondscf}
\min_{s_i\geq0}\kappa\left(\Sum_{i=1}^M s_i^2 f_i f_i^*\right),
\end{equation}

is equivalent to solving the minimization problem 

\begin{equation}\label{distancetoI_2}
\min_{s_i\geq 0} \norm{I_n-\Sum_{i=1}^M s_i^2 f_i f_i^*}_2,
\end{equation}
where $\norm{\cdot}_2$ is the operator norm of a matrix. Specifically they show that any optimizer of (\ref{mincondscf}) is also an optimizer of (\ref{distancetoI_2}); vice-versa, any optimizer of (\ref{distancetoI_2}) minimizes the condition number in (\ref{mincondscf}).
 Furthermore, they show that the optimal solution to~\eqref{distancetoI}
does not even have to be a frame, and so would yield an undefined condition number for the corresponding system. 

In this chapter, we consider numerical solutions to the scalability problem. Recall that   a frame $F=\{f_i\}_{i=i}^M \subset \mathcal{H}$ is scalable if and only if the exist scalars $\{s_i\}_{i=1}^M\subset [0, \infty)$ such that $$\sum_{i=1}^Ms_i^2f_if_i^{\*}=I.$$ Consequently, the condition number of the scaled frame $\tilde{F}=\{s_if_i\}_{i=i}^M$ is $1$.  We are thus interested in investigating the solutions to the following three optimization problems:

\begin{equation}\label{eq1}
\min_{s_{i}\geq0\,,\,s\neq\0} \dfrac{\lambda_{\max}\left(\sum_{i=1}^{M}s_{i}^{2}f_{i}f_{i}^{*}\right)}{\lambda_{\min}\left(\sum_{i=1}^{M}s_{i}^{2}f_{i}f_{i}^{*}\right)}.
\end{equation}

\begin{equation}\label{eq2}
\min_{\begin{array}{c} \mbox{$s_{i}\geq0\,,\,s\neq\0$} \\ \mbox{$\sum_{i=1}^M s_i^2 \norm{f_i}_2^2 = N$} \end{array} } \lambda_{\max}\left(\sum_{i=1}^{M}s_{i}^{2}f_{i}f_{i}^{*}\right) - \lambda_{\min}\left(\sum_{i=1}^{M}s_{i}^{2}f_{i}f_{i}^{*}\right).
\end{equation}

\begin{equation}\label{eq3}
\min_{s_{i}\geq0\,,\,s\neq\0} \left\|I_{N} - \sum_{i=1}^{M}s_{i}^{2}f_{i}f_{i}^{*}\right\|_{F}.
\end{equation}

Our motivation stems from the fact it appears from the existing literature on scalable frames that the set of all such frames is relatively small, e.g., see \cite{KOscalable2014}. As a result, one is interested in scaling a frame in an optimal manner. For example, by minimizing the condition number of the scaled frame~\eqref{eq1}, or the gap of the spectrum of the scaled frame~\eqref{eq2}. Furthermore, one can try to find the relationship between the optimal solutions to these two problems with the measures of scalability introduced in \cite{ChenKOPWscalable}, of which~\eqref{distancetoI} is a typical example. 

In addition, we investigate these optimization problems from a practical point of view: the existence of fast algorithms to produce optimal solutions. As such, we are naturally lead to consider these problems in the context of convex optimization. We recall that in such a setting one wants to solve for 
$s^*=\argmin_{s} f(s)$ for a real convex function $f:X\to\R\cup\{\infty\}$ defined on a convex set $X$.  Using the  convexity of $f$ and $X$ it follows that:
\begin{enumerate}
\item If $s^*$ is a local minimum of $f$, then it is a global minimum.
\item The set of all (global) minima is convex.
\item  If $f$ is a strictly convex function and a minimum exists, then the minimum is unique.
\end{enumerate}
In addition, the convexity of $f$ and $X$ allows the use of convex analysis to produce fast, efficient algorithmic solvers, we refer to \cite{ConvexBook} and references therein for more details. 

We point out that \eqref{eq1} is equivalent to \eqref{mincondscf} simply by the definition of condition number of a frame.  However, the condition number function  $\kappa$, is not convex. As such,  it is nontrivial to find the optimal solution of~\eqref{eq1}. However, $\kappa$ is a quasiconvex function (see \cite[Theorem 13.6]{Axelsson1996} for a proof), meaning that its lower level sets form convex sets; that is, the set $\{X: \kappa(X)\leq a\}$ forms a convex set for any real $a\geq 0$.  See \cite{eppstein2005} and references therein for a survey on some algorithms that can numerically solve certain quasiconvex problems.
We refer to \cite{Ye2009} for a survey of results on optimizing the condition number.  But we note that, while minimizing the condition number $\kappa$ is not a convex problem, an equivalent convex problem was considered in \cite{Lu2011}. For comparison and completeness we state one of the main results of \cite{Lu2011}. First, observe that if $X$ is a symmetric positive semidefinite matrix, then its \emph{condition number} is defined as
$$\kappa(X)=\left\{\begin{array}{ll} \lambda_{\max}(X)/\lambda_{\min}(X)& \text{if }\lambda_{\min}(X)>0,\\
\infty& \text{if }\lambda_{\min}(X)=0\text{ and }\lambda_{\max}(X)>0,\\
0&\text{if }X\equiv 0.\end{array}\right.$$ 
In this setting, it was proved in  \cite{Lu2011} that the problem of minimizing the condition number is equivalent to solving another problem with convex programming.
\begin{thm}[\cite{Lu2011}, Theorem 3.1]\label{th:Lu2011}
Let $\Omega\subseteq \mathcal S^N$ be some nonempty closed convex subset of $\mathcal S^N$, the space of $N\times N$ symmetric matrices and let $\mathcal S_+^N$ be the space of symmetric positive semidefinite $N\times N$ matrices.  Then the problem of solving
$$\kappa^* = \inf\{\kappa(X):X\in \mathcal S_+^N \cap \Omega\}$$ 
is equivalent to the problem of solving
\begin{equation}\lambda^*=\inf\{\lambda_{\max}(X):X\in t\Omega, \, t\geq 0,\, X\succeq I\},\label{condprob2}\end{equation}
that is, $\lambda^*=\kappa^*$.
\end{thm}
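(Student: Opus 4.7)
The plan is to prove the equality $\kappa^* = \lambda^*$ by establishing the two inequalities separately, using a simple one-to-one scaling correspondence between feasible points of the two problems. The main tool is the rescaling $X \mapsto tX$ with an appropriate choice of $t \ge 0$, together with the observation that for any positive definite $X$, the matrix $X/\lambda_{\min}(X)$ has smallest eigenvalue exactly $1$.

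First I would show $\lambda^* \le \kappa^*$. If $\kappa^* = \infty$ there is nothing to prove, so assume some $X \in \mathcal S_+^N \cap \Omega$ has $\kappa(X) < \infty$; in particular $\lambda_{\min}(X) > 0$. Set $t = 1/\lambda_{\min}(X) \ge 0$ and $Y = tX$. Then $Y \in t\Omega$, $\lambda_{\min}(Y) = t\lambda_{\min}(X) = 1$ so $Y \succeq I$, and
$$\lambda_{\max}(Y) = t\,\lambda_{\max}(X) = \frac{\lambda_{\max}(X)}{\lambda_{\min}(X)} = \kappa(X).$$
Hence $(Y,t)$ is feasible for \eqref{condprob2} and $\lambda^* \le \lambda_{\max}(Y) = \kappa(X)$. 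Taking the infimum over all admissible $X$ yields $\lambda^* \le \kappa^*$.

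Next I would show $\kappa^* \le \lambda^*$. If $\lambda^* = \infty$ the bound is vacuous, so consider any feasible triple $(X,t)$ with $X \in t\Omega$, $t \ge 0$, $X \succeq I$. The constraint $X \succeq I$ forces $X \ne 0$; combined with $X \in t\Omega$ (which, for $t = 0$, collapses to $\{0\}$), this excludes $t = 0$, so $t > 0$. Define $Y = X/t$; then $Y \in \Omega$, and $Y$ inherits positive definiteness from $X \succeq I$, so $Y \in \mathcal S_+^N \cap \Omega$. Moreover, since $\lambda_{\min}(X) \ge 1$,
$$\kappa(Y) = \frac{\lambda_{\max}(X)/t}{\lambda_{\min}(X)/t} = \frac{\lambda_{\max}(X)}{\lambda_{\min}(X)} \le \lambda_{\max}(X).$$
Thus $\kappa^* \le \kappa(Y) \le \lambda_{\max}(X)$, and taking the infimum over feasible $(X,t)$ yields $\kappa^* \le \lambda^*$. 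Combining the two inequalities gives $\kappa^* = \lambda^*$.

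The only delicate point, and what I would call the main obstacle, is the bookkeeping around boundary cases: ensuring that the scalar $t=0$ is excluded in the second direction and that the degenerate cases of $\kappa$ (namely $X \equiv 0$ and $\lambda_{\min}(X)=0$ with $\lambda_{\max}(X)>0$) do not spoil the infimum. Once one observes that $X \succeq I$ rules out $t=0$, and that matrices with infinite condition number never beat a feasible positive definite candidate, the scaling argument goes through cleanly; no appeal to convexity of $\Omega$ is even required for the equality itself (convexity only becomes relevant when one wants \eqref{condprob2} to be a tractable convex program).
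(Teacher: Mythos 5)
Your argument is correct and is precisely the scaling correspondence that the paper itself only sketches in the ``intuitive interpretation'' paragraph following Theorem~\ref{th:Lu2011} (the paper does not reprove the result, deferring to \cite{Lu2011}), so you have simply made that sketch rigorous in both directions. The one point worth flagging is that under the stated convention $\kappa(0)=0$ the degenerate case $0\in\Omega$ is an actual exception to the equality (there $\kappa^*=0$ while every feasible $X\succeq I$ has $\lambda_{\max}(X)\geq 1$), so your step ``$\kappa(X)<\infty$, in particular $\lambda_{\min}(X)>0$'' tacitly excludes $X=0$; this is an artifact of the transcribed statement rather than a flaw in your argument.
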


The problem described by \eqref{condprob2} can be restated as solving for optimal scalars $\{s_i\}$ satisfying
\begin{equation}\label{eq4}
\min_{s_{i}\geq0\,,\,s\neq\0} \left\{ \lambda_{\max}\left(\sum_{i=1}^{M}s_{i}^{2}f_{i}f_{i}^{*}\right) \,\left|\, \lambda_{\min}\left(\sum_{i=1}^{M}s_{i}^{2}f_{i}f_{i}^{*}\right.\right)\geq1\right\}.
\end{equation}
Therefore, when we obtain numerical solutions to the condition number problem \eqref{eq1}, we actually solve \eqref{eq4} and the theory of \cite{Ye2009} guarantees that the optimal solutions to both problems are indeed equal.

Theorem \ref{th:Lu2011} has an intuitive interpretation.  Suppose $\kappa(X)=\kappa^*$.  Then rescaling $X$ by a positive scalar, $t$, will also scale its eigenvalues by the same factor $1/t$, thus leaving its condition number, $\kappa(X/t)$, unchanged.  Therefore, without loss of generality, we can assume that $X$ is rescaled so that $\lambda_{\min}(X/t)\geq 1$ which is imposed in the last condition of \eqref{condprob2}.  Once we know that $\lambda_{\min}(X/t)$ is at least 1 then minimizing the condition number of $X/t$ is equivalent to minimizing $\lambda_{\max}(X/t)$ so long as $X/t\in\Omega$ which is guaranteed by the first condition in \eqref{condprob2}.

The goal of this chapter is to investigate the relationship among the solutions to each of the optimization problems~\eqref{eq1}, \eqref{eq2}, and \eqref{eq3}.  In addition, we shall investigate the behavior of the optimal solution to each of these problems vis-\'a-vis the projection of a non-scalable frame onto the set of scalable frames. We shall also describe a number of algorithms to solve some of these problems and compare some of the performances of these algorithms. Finally, we shall apply some of the results of frame scalability to the problem of reweighing a graph in a such a way that the condition number of the resulting Laplacian is as small as possible. The chapter is organized as follow. In Section~\ref{sec2} we investigate the three problems stated above and compare their solutions, and in Section~\ref{sec3} we consider the application to finite graph reweighing.

\section{Non-scalable frames and optimally conditioned scaled frames}\label{sec2}We begin by showing the relationship between the three formulations of this scalability problem.
We shall first show the equivalence of these problems when a frame is exactly scalable, and present toy examples of the different solutions obtained when a frame is only approximately scalable.

\begin{lemma}\label{lem1}
Let $F=\{f_i\}_{i=1}^M$ be a frame in $\R^N$. Then the following statements are equivalent:
\begin{enumerate}
\item[(a)]  $F = \{f_{i}\}_{i=1}^{M}$ is a scalable frame.
\item[(b)]  Problem~\eqref{eq1} has a global minimum solution, $s^{*}=\{s_{i}^{*}\}$, with objective function value 1.
\item[(c)] Problem~\eqref{eq2} has a global minimum solution, $s^{*}=\{s_{i}^{*}\}$, with objective function value 0.
\item[(d)] Problem~\eqref{eq3} has a global minimum solution, $s^{*}=\{s_{i}^{*}\}$, with objective function value 0.
\end{enumerate}
\end{lemma}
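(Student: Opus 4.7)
The plan is to prove a cyclic/star structure by showing (a) $\Rightarrow$ each of (b), (c), (d), and then each of (b), (c), (d) $\Rightarrow$ (a). Throughout, the main observation I will exploit is the trace identity
\[
\mathrm{tr}\!\left(\sum_{i=1}^M s_i^2 f_i f_i^*\right) = \sum_{i=1}^M s_i^2 \|f_i\|_2^2,
\]
which links the normalization constraint in \eqref{eq2} to the spectrum of the scaled frame operator, together with the universal lower bounds for each objective (condition number $\geq 1$, spectral gap $\geq 0$, Frobenius distance $\geq 0$).

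First I would prove (a) $\Rightarrow$ (b), (c), (d) together. Suppose there exist $s_i \geq 0$ with $\sum s_i^2 f_i f_i^* = I_N$. Then $\lambda_{\max} = \lambda_{\min} = 1$, so the ratio in \eqref{eq1} equals $1$; since $\kappa \geq 1$ on any positive definite operator, this is a global minimum, giving (b). The same scaling yields $\|I_N - \sum s_i^2 f_i f_i^*\|_F = 0$, a trivial minimum, giving (d). For (c), the trace identity forces $\sum s_i^2 \|f_i\|_2^2 = \mathrm{tr}(I_N) = N$, so the normalization constraint is automatically satisfied, and $\lambda_{\max} - \lambda_{\min} = 0$ is the global minimum.

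Next I would do the reverse implications individually. For (d) $\Rightarrow$ (a), Frobenius norm equal to zero forces $\sum s_i^2 f_i f_i^* = I_N$ exactly, which is the definition of scalability. For (b) $\Rightarrow$ (a), the objective in \eqref{eq1} being equal to $1$ at some admissible $s^*$ requires $\lambda_{\min}(\sum (s_i^*)^2 f_i f_i^*) > 0$ (otherwise the ratio would be undefined or $+\infty$ under the convention in the introduction), and $\lambda_{\max} = \lambda_{\min} = c > 0$; hence the scaled frame operator equals $cI_N$, and replacing each $s_i^*$ by $s_i^*/\sqrt{c}$ produces a Parseval scaling, proving $F$ is scalable. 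For (c) $\Rightarrow$ (a), the vanishing spectral gap gives $\sum (s_i^*)^2 f_i f_i^* = cI_N$ for some $c \geq 0$; the constraint $\sum (s_i^*)^2 \|f_i\|_2^2 = N$ together with the trace identity yields $Nc = N$, so $c = 1$, and again $F$ is scalable.

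The only subtle point is that (b) relies on the convention for $\kappa$ on singular positive semidefinite matrices stated earlier in the excerpt: we must rule out the degenerate case where $\lambda_{\min} = 0$ at the minimizer, and this is what ensures the optimal value $1$ actually corresponds to a genuine isotropic scaling rather than a pathological configuration. The implication (c) $\Rightarrow$ (a) similarly needs a brief argument that $c > 0$, but this is immediate from the normalization $\sum (s_i^*)^2 \|f_i\|_2^2 = N > 0$ combined with $s^* \neq \mathbf{0}$. I do not anticipate any hard steps beyond bookkeeping of these edge cases.
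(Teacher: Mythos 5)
Your proposal is correct and follows essentially the same route as the paper: a star structure through (a), using the trace identity to handle the normalization constraint in \eqref{eq2}, the rescaling by $1/\sqrt{c}$ for the condition-number direction, and the observation that vanishing Frobenius norm forces the scaled frame operator to equal $I_N$. Your explicit handling of the degenerate cases ($\lambda_{\min}=0$ and $c>0$) is slightly more careful than the paper's write-up but does not change the argument.
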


\begin{proof}
Assume $F$ is scalable with weights, $\{s_{i}\}_{i=1}^{M}$.
Then $\widetilde{S} = \sum_{i=1}^{M}s_{i}^{2}f_{i}f_{i}^{*} = I_{N}$, and the largest and smallest eigenvalue of the scaled frame operator is 1,
\begin{equation*}
\dfrac{\lambda_{\max}\left(\sum_{i=1}^{M}s_{i}^{2}f_{i}f_{i}^{*}\right)}{\lambda_{\min}\left(\sum_{i=1}^{M}s_{i}^{2}f_{i}f_{i}^{*}\right)} = \dfrac{\lambda_{\max}\left(\widetilde{S}\right)}{\lambda_{\min}\left(\widetilde{S}\right)} = 1.
\end{equation*}

Assume problem~\eqref{eq1} has a global minimum solution, $\{s_{i}\}_{i=1}^{M}$.
As, $\lambda_{\max} \geq \lambda_{\min}$, the feasible solution must result in $\lambda_{\max} = \lambda_{\min} = A$.
Applying this feasible solution as a scaling of $F$, we have,
\begin{equation*}
\widetilde{S} = \sum_{i=1}^{M}s_{i}^{2}f_{i}f_{i}^{*} = AI_{N}.
\end{equation*}
By normalizing the feasible solution by the square-root of $A$, we have the Parseval scaling,
$$\{\tilde{s}_{i}\}_{i=1}^{M} = \left\{\dfrac{1}{\sqrt{A}}s_{i}\right\}_{i=1}^{M}.$$
 We have just proved that (a) and (b) are equivalent.

Assume $F$ is scalable with weights, $\{s_{i}\}_{i=1}^{M}$.
Then $\widetilde{S} = \sum_{i=1}^{M}s_{i}^{2}f_{i}f_{i}^{*} = I_{N}$, and the difference between the largest and smallest eigenvalue of the scaled frame operator is 0,
\begin{equation*}
\lambda_{\max}\left(\sum_{i=1}^{M}s_{i}^{2}f_{i}f_{i}^{*}\right) - \lambda_{\min}\left(\sum_{i=1}^{M}s_{i}^{2}f_{i}f_{i}^{*}\right) = \lambda_{\max}\left(\widetilde{S}\right) -\lambda_{\min}\left(\widetilde{S}\right) = 0.
\end{equation*}
Additionally $N=tr(I_N) = \sum_{i=1}^M s_i^2 \norm{f_i}_2^2$ which shows that $\{s_i\}_{i=1}^M$ is a feasible solution for (\ref{eq2}).

Assume problem~\eqref{eq2} has a global minimum solution, $\{s_{i}\}_{i=1}^{M}$.
As, $\lambda_{\max} \geq \lambda_{\min}$, the feasible solution must result in $\lambda_{\max} = \lambda_{\min} = A$.
Applying this feasible solution as a scaling of $F$, we have,
\begin{equation*}
\widetilde{S} = \sum_{i=1}^{M}s_{i}^{2}f_{i}f_{i}^{*} = AI_{N}.
\end{equation*}
But the feasibility condition $\sum_{i=1}^M s_i^2 \norm{f_i}_2^2 = N$ implies $N=tr(AI_N)$, hence $A=1$. 
We have just proved that (a) and (c) are equivalent.

Assume $F$ is scalable with weights, $\{s_{i}\}_{i=1}^{M}$.
Then $\widetilde{S} = \sum_{i=1}^{M}s_{i}^{2}f_{i}f_{i}^{*} = I_{N}$, and the objective function for \eqref{eq3} attains the global minimum ,
\begin{equation*}
\left\|I_{N} - \sum_{i=1}^{M}s_{i}^{2}f_{i}f_{i}^{*}\right\|_{F} = \left\|I_{N} - I_{N}\right\|_{F} = 0.
\end{equation*}

Assume problem~\eqref{eq3} has a global minimum solution, $\{s_{i}\}_{i=1}^{M}$, which occurs when $\left\|I_{N} - \sum_{i=1}^{M}s_{i}^{2}f_{i}f_{i}^{*}\right\|_{F} = 0$.
This implies that $\widetilde{S} = \sum_{i=1}^{M}s_{i}^{2}f_{i}f_{i}^{*} = I_{N}$, and we have a Parseval scaling.
We have just proved that (a) and (d) are equivalent. 
\end{proof}

\begin{rem}\label{remark1}
Lemma~\ref{lem1} asserts that the problem of finding optimal scalings, $\{s_{i}\}_{i=1}^{M}$, for a given scalable frame $F=\{f_{i}\}_{i=1}^{M}$ is equivalent to finding the absolute minimums of the following optimization problems:
\begin{itemize}
\item $\min_{s_{i}\geq0\,,\,s\neq\0} \dfrac{\lambda_{\max}\left(\sum_{i=1}^{M}s_{i}^{2}f_{i}f_{i}^{*}\right)}{\lambda_{\min}\left(\sum_{i=1}^{M}s_{i}^{2}f_{i}f_{i}^{*}\right)}$

\item $\min_{\begin{array}{c} \mbox{$s_{i}\geq 0\,,\,s\neq\0$} \\ \mbox{$\sum_{i=1}^M s_i^2 \norm{f_1}_2^2 = N$} \end{array} } \lambda_{\max}\left(\sum_{i=1}^{M}s_{i}^{2}f_{i}f_{i}^{*}\right) - \lambda_{\min}\left(\sum_{i=1}^{M}s_{i}^{2}f_{i}f_{i}^{*}\right)$

\item $\min_{s_{i}\geq0\,,\,s\neq\0} \left\|I_{N} - \sum_{i=1}^{M}s_{i}^{2}f_{i}f_{i}^{*}\right\|_{F}$
\end{itemize}

\end{rem}

Lemma~\ref{lem1} is restrictive in that it requires the frame $F=\{f_i\}_{i=1}^M$ be scalable to state equivalence among problems, but there can be a wide variance in the solutions obtained when the frame is not scalable.
Even nearly-tight frames vary in initial feasible solutions.
We briefly consider $\epsilon$-tight frames and analyze the distance from the minimum possible objective function value.

Let $F_{\epsilon}=\{g_{i}\}_{i=1}^{M}$ with $\norm{g_i}_2=1$ for all $i$ be an $\epsilon$-tight frame such that,
$$(1-\epsilon)I_{N} \preceq \sum_{i=1}^{M}g_{i}g_{i}^{*} \preceq (1+\epsilon)I_{N}.$$
First considering the case in which the frame cannot be conditioned any further, so the optimal scaling weights are $s_{i} = 1$.
Analyzing the solution produced by the three optimization methods, we see the difference in solutions produced.
\begin{equation*}\begin{split}
&\dfrac{\lambda_{\max}\left(\sum_{i=1}^{M}s_{i}^{2}g_{i}g_{i}^{*}\right)}{\lambda_{\min}\left(\sum_{i=1}^{M}s_{i}^{2}g_{i}g_{i}^{*}\right)} = \dfrac{\lambda_{\max}\left(\sum_{i=1}^{M}g_{i}g_{i}^{*}\right)}{\lambda_{\min}\left(\sum_{i=1}^{M}g_{i}g_{i}^{*}\right)} = \dfrac{1+\epsilon}{1-\epsilon} = 1 +\dfrac{2\epsilon}{1-\epsilon}.\\
&\lambda_{\max}\left(\sum_{i=1}^{M}s_{i}^{2}f_{i}f_{i}^{*}\right) - \lambda_{\min}\left(\sum_{i=1}^{M}s_{i}^{2}g_{i}g_{i}^{*}\right) = (1+\epsilon) - (1-\epsilon) = 2\epsilon\\
&\lambda_{\max}\left(\sum_{i=1}^{M}s_{i}^{2}g_{i}g_{i}^{*} \right) = \lambda_{\max}\left(\sum_{i=1}^{M}g_{i}g_{i}^{*} \right) = 1 + \epsilon.
\end{split}\end{equation*}
We lack the information necessary to give exact results for formulation \eqref{eq3}, so we instead give an upper bound when $s_i=1$.
\begin{equation*}\begin{split}
\left\|I_{N} - \sum_{i=1}^{M}s_{i}^{2}g_{i}g_{i}^{*}\right\|_{F} &= \left\|I_{N} - \sum_{i=1}^{M}g_{i}g_{i}^{*}\right\|_{F}\\
&\leq \sqrt{N}\left\|I_{N} - \sum_{i=1}^{M}g_{i}g_{i}^{*}\right\|_{2}\\
&\leq \epsilon \sqrt{N}.
\end{split}\end{equation*}

It makes sense that we could enforce this constraint, as we could re-normalize the frame elements by the reciprocal of the smallest eigenvalue of the frame operator.
It is not true, though, that the scalings produced must be the same.
Moreover, when not using the constraint on the smallest eigenvalue, the scalings can vary wildly.

\begin{rem}\label{remark2}
For general frames, the optimization problems (\ref{eq1})-(\ref{eq3}) do not produce tight frames. However they can be solved using special classes of convex optimization algorithms: problems (\ref{eq1}) and (\ref{eq2}) are solved by 
{\em Semi-Definite Programs} (SDP), whereas problem (\ref{eq3}) is solved by a Quadratic Program (QP) -- see \cite{ConvexBook} for details on SDPs and QPs. 
In the following we state these SDPs explicitly. 

{\bf SDP 1} -- Operator Norm Optimization:
\begin{equation} \label{eq:SDP1}
(t^1,s^{(1)}) = argmin_{\begin{array}{c} \mbox{$t,s_1,\ldots,s_M\geq 0$} \\ \mbox{$\sum_{i=1}^M s_i^2 f_i f_i^*-t I_N -I_N \leq 0$} \\ \mbox{$\sum_{i=1}^M s_i^2 f_i f_i^* + t I_N - I_N\geq 0 $} \end{array} } t  
\end{equation}
This SDP implements the optimization problem (\ref{distancetoI_2}). In turn, as showed by Cassaza and Chen in \cite{CasazzaChen2015}, the solution to this problem is also an optimizer of the condition number optimization problem (\ref{eq1}).
Conversely, assume $s^{(*)}$ is a solution of (\ref{eq1}). Let $A=\lambda_{min}(\sum_{i=1}^M s_i^2 f_i f_i^*)$ and $B=\lambda_{max}(\sum_{i=1}^M s_i^2 f_i f_i^*)$. Let $r=\frac{2}{A+B}$. Then $s^{(*)} = (rs_i^2)_{i=1}^M$ is a solution
of (\ref{eq:SDP1}) and the optimum value of the optimization criterion is $t^1 = rB-1=1-rA$.

{\bf SDP 2} -- Minimum Upper Frame Bound Optimization:
\begin{equation} \label{eq:SDP2}
(t^2,s^{(2)}) = argmin_{\begin{array}{c} \mbox{$t,s_1,\ldots,s_M \geq 0$} \\ \mbox{$\sum_{i=1}^M s_i^2 f_i f_i^* -I_N \geq 0$} \\ \mbox{$\sum_{i=1}^M s_i^2 f_i f_i^*  -t I_N\leq 0 $} \end{array} } t  
\end{equation}
This SDP implements the optimization problem (\ref{eq4}) which as previously discussed, also produces the solution $s^{(2)}$ to (\ref{eq1}). Conversely, assume $s^{(*)}$ is a solution of  (\ref{eq1}). 
Let $A=\lambda_{min}(\sum_{i=1}^M s_i^2 f_i f_i^*)$ and $B=\lambda_{max}(\sum_{i=1}^M s_i^2 f_i f_i^*)$. Let $r=\frac{1}{A}$. Then $s^{(*)} = (rs_i^2)_{i=1}^M$ is a solution of (\ref{eq:SDP2}), and the optimum value
of the optimization criterion is $t^2=\frac{B}{A}$.

{\bf SDP 3} -- Spectral Gap Optimization:
\begin{equation} \label{eq:SDP3}
(t^3,v^3,s^{(3)}) = argmin_{\begin{array}{c} \mbox{$t,v,s_1,\ldots,s_M\geq 0$} \\ \mbox{$\sum_{i=1}^M s_i^2 f_i f_i^* -t I_N\leq 0$} \\ \mbox{$\sum_{i=1}^M s_i^2 f_i f_i^* - v I_N\geq 0 $} \\ \mbox{$\sum_{i=1}^M s_i \norm{f_i}_2^2 = N $} \end{array} } t-v  
\end{equation}
This SDP implements the optimization problem (\ref{eq2}). As remarked earlier (\ref{eq2}) is not equivalent to
any of (\ref{distancetoI_2}),(\ref{eq1}) or (\ref{eq4}). A spectral interpretation of these optimization problems is as follows. The SDP 1 (and implicitly (\ref{eq1}) and (\ref{eq4})) scales the frame so that the largest and smallest
eigenvalues of the scaled frame operator are equidistant and closest to value 1. The SDP 3 scales the frame so that the largest and smallest eigenvalues of the scaled frame operator are closest to one another while the average eigenvalue is set to 1. Equivalently, the solution to SDP 3 also minimizes the following criterion:
\[ \frac{\lambda_{max}(\tilde{S})-\lambda_{min}(\tilde{S})}{\frac{1}{N}tr(\tilde{S})} \]
where $\tilde{S}=\sum_{i=1}^M s_i^2 f_i f_i^*$ is the scaled frame operator.

{\bf QP 4} -- Frobenius Norm Optimization:
\begin{equation} \label{eq:SDP3}
s^{(4)} = argmin_{\begin{array}{c} \mbox{$s_1,\ldots,s_M\geq 0$} \end{array} } \sum_{i,j=1}^M s_is_j |\ip{f_i}{f_j}|^2  - 2 \sum_{i=1}^M s_i^2 \norm{f_i}_2^2 + N
\end{equation}
This QP implements the optimization problem (\ref{eq3}).

\end{rem}

\begin{example}
Consider the 5-element frame, $X\subseteq \R^3$, generated such that each coordinate is a random integer from 0 to 5.  
$$X=\left[\begin{array}{ccccc}2 & 4 & 1 & 4 & 4 \\3 & 1 & 2 & 0 & 2 \\1 & 4 & 3 & 5 & 2\end{array}\right]$$

We then numerically compute $X_\kappa$, $X_g$, $X_F$, which are the rescaled frames that minimize problems SDP 1, SDP 3 and QP 4, respectively.  That is, $X_\kappa$ is the rescaled frame, $X_\kappa=\{s_i f_i\}$, such that $s^*=\{s_i\}$ is the minimizer to Problem \eqref{distancetoI_2}, which also minimizes the frame condition number, $\kappa$.  Similarly, $X_g$ is rescaled to minimize the eigenvalue gap $\lambda_{\max{}}-\lambda_{\min{}}$ while the average eigenvalue is 1, and $X_F$ is rescaled to minimize Frobenius distance to the identity matrix.

In our numerical implementation minimizing condition number, we used the CVX toolbox in MATLAB \cite{cvxMATLAB} which is a solver for convex optimization problems.

Let $s_\kappa$, $s_g$, and $s_F$ denote the scaling vectors that determine the frames $X_\kappa$, $X_g$, and $X_F$, respectively.  That is, $X_\kappa=S_\kappa^{1/2} X$ where $S_\kappa$ is the diagonal matrix with values given by $s_\kappa$, and so on.  We obtained scalings

\begin{center}
\begin{tabular}{r l l l l l}
$s_\kappa$=&[0.0187,& 0,& 0.0591,& 0.0122,& 0.0242],\\
$s_g$=&[0.0875,&0,& 0.0398,& 0.0297,& 0],\\
$s_F$=&[0.0520,& 0,& 0.0066,& 0.0177,& 0].
\end{tabular}
\end{center}

The results comparing each of the four frames are summarized in Table \ref{tab:compare}.

\begin{table}
\begin{center}
\begin{tabular}{|c||c|c|c|c|c|c|}\hline
&$\lambda_{\min{}}$&$\lambda_{\max{}}$&$\kappa$&$(\lambda_{\max{}}-\lambda_{\min{}})/\frac{1}{N}\sum_{i=1}^N \lambda_i$ & $\norm{I_3-\cdot}_F$ & $\norm{I_3-\cdot}_2$ \\\hline
$X$&4.1658&110.41&26.504&2.5296&109.95 &109.41 \\\hline
$X_\kappa$&  0.1716 & 1.8284 & {\bf 10.655}& 2.2888 & 1.4348 & {\bf 0.8284} \\\hline
$X_g$ & 0.0856 & 2.3558 & 27.501 & {\bf 2.2701} & 1.6938 & 1.3558 \\\hline
$X_F$ & 0.01672 & 1.1989 & 71.667 & 2.2903 & {\bf 1.2048} & 0.9832 \\\hline
\end{tabular}
\end{center}
\caption{Comparisons of extreme eigenvalues, condition number, relative spectral gap, Frobenius distance to identity, and the operator norm distance to identity for the non scalable frame $X$ and its rescaled versions that minimize Problems \eqref{eq1}--\eqref{eq3}.}\label{tab:compare}
\end{table}

Observe that each of the three methods can produce widely-varying spectra.

\end{example}

We now demonstrate special conditions in which a frame's condition number can be decreased using matrix perturbation theory.
\begin{lemma}[Weyl's Inequality, {\cite[Corollary 4.9]{stewart1990matrix}}]
Let $A$ be a Hermitian matrix with real eigenvalues $\{\lambda_i(A)\}_{i=1}^d$ and let $B$ be a Hermitian matrix of the same size as $A$ with eigenvalues $\{\lambda_i(B)\}_{i=1}^d$.  Then for any $i=1,...,d$ we have 
$$\lambda_i(A+B)\in[\lambda_i(A)+\lambda_1(B),\lambda_i(A)+\lambda_d(B)].$$
\end{lemma}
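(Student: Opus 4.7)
The plan is to prove Weyl's inequality via the Courant--Fischer min--max characterization of eigenvalues of Hermitian matrices. Writing the eigenvalues in increasing order $\lambda_1 \leq \lambda_2 \leq \cdots \leq \lambda_d$, the starting point is the identity
\[
\lambda_i(M) \;=\; \min_{\substack{V\subseteq\C^d\\ \dim V = d-i+1}} \; \max_{\substack{x\in V\\ \nm{x}=1}} \ip{Mx}{x},
\]
valid for every Hermitian matrix $M$ of size $d$, together with the fact that for a Hermitian $B$ we have $\lambda_1(B) \leq \ip{Bx}{x} \leq \lambda_d(B)$ for every unit vector $x$.

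For the upper estimate $\lambda_i(A+B)\leq \lambda_i(A)+\lambda_d(B)$, I would apply the min--max formula to $A+B$, split the inner product as $\ip{(A+B)x}{x}=\ip{Ax}{x}+\ip{Bx}{x}$, then use the subadditivity of the maximum and the uniform bound $\ip{Bx}{x}\leq \lambda_d(B)$ to pull the $B$-term outside the max. What remains is precisely the min--max expression defining $\lambda_i(A)$, giving the claimed bound. For the lower estimate $\lambda_i(A+B)\geq \lambda_i(A)+\lambda_1(B)$, the same calculation runs with the reverse inequality $\ip{Bx}{x}\geq \lambda_1(B)$: this additive constant can be pulled out of both the inner max and the outer min, leaving $\lambda_i(A)+\lambda_1(B)$.

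The main obstacle, modest as it is, will be the careful handling of the supremum/infimum manipulations to ensure that pulling the $B$-contribution outside is justified and that the right direction of monotonicity is preserved. Once this bookkeeping is correct, both inclusions $\lambda_i(A+B)\leq \lambda_i(A)+\lambda_d(B)$ and $\lambda_i(A+B)\geq \lambda_i(A)+\lambda_1(B)$ follow without further effort, and together they yield the stated interval containment. I would close by remarking that the same argument with the dual max--min formula produces the symmetric inequality $\lambda_i(A)\in[\lambda_i(A+B)-\lambda_d(B),\lambda_i(A+B)-\lambda_1(B)]$, which is a useful sanity check.
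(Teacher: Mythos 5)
The paper does not prove this lemma at all: it is imported verbatim from Stewart and Sun \cite[Corollary 4.9]{stewart1990matrix}, so there is no in-paper argument to compare against. Your Courant--Fischer proof is the standard route (and essentially the one in the cited source), and the substance is correct: bounding $\ip{Bx}{x}$ between $\lambda_1(B)$ and $\lambda_d(B)$ for unit vectors, pulling that constant through the inner max and outer min, and recognizing the remaining expression as $\lambda_i(A)$. One small inconsistency to fix: with eigenvalues in \emph{increasing} order, the min--max characterization is $\lambda_i(M)=\min_{\dim V=i}\max_{x\in V,\nm{x}=1}\ip{Mx}{x}$ (or dually $\max_{\dim V=d-i+1}\min$); the formula you wrote, $\min_{\dim V=d-i+1}\max$, characterizes the $i$-th \emph{largest} eigenvalue. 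Since you apply the same formula consistently to $A+B$ and to $A$, the argument still yields the stated inequalities after the relabeling $i\mapsto d-i+1$, but as written the phrase ``precisely the min--max expression defining $\lambda_i(A)$'' identifies the wrong eigenvalue under your declared ordering. Correct the subspace dimension and the proof is complete.
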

An immediate corollary of Weyl's inequality tells us that perturbing a matrix by a positive semidefinite matrix will cause the eigenvalues to not decrease.
\begin{cor}\label{cor:Weyl}
Let $A$ be a Hermitian matrix with real eigenvalues $\{\lambda_i(A)\}_{i=1}^d$ and let $B\succeq 0$ be Hermitian and of the same size of $A$.  Then for any $i=1,...,d$, we have $\lambda_i(A)\leq \lambda_i(A+B)$.  The inequality is strict if $B\succ 0$ is positive definite.
\end{cor}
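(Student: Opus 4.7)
The plan is to derive this directly from Weyl's inequality, which is stated immediately above. Weyl asserts
\[ \lambda_i(A+B) \geq \lambda_i(A) + \lambda_1(B), \]
so all I need is a sign condition on $\lambda_1(B)$, where (given the bracket in the lemma) $\lambda_1(B)$ denotes the smallest eigenvalue of $B$.

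First I would observe that $B \succeq 0$ is equivalent to all eigenvalues of $B$ being nonnegative, and in particular $\lambda_1(B) \geq 0$. Plugging this into Weyl's lower bound immediately yields $\lambda_i(A+B) \geq \lambda_i(A) + \lambda_1(B) \geq \lambda_i(A)$, which is the desired inequality. For the strict case, I would note that $B \succ 0$ forces every eigenvalue of $B$ to be strictly positive, so $\lambda_1(B) > 0$ and the same chain of inequalities becomes strict in the final step.

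There is essentially no main obstacle here — the only care needed is to confirm the eigenvalue-ordering convention from the Weyl statement (the interval $[\lambda_i(A)+\lambda_1(B),\lambda_i(A)+\lambda_d(B)]$ tells us $\lambda_1 \le \cdots \le \lambda_d$, so $\lambda_1(B)$ really is the smallest eigenvalue, and positive semidefiniteness of $B$ controls exactly this quantity). The proof should be one or two sentences long.
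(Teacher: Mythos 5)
Your proof is correct and matches the paper's intent exactly: the paper presents this as an immediate consequence of the preceding Weyl lemma, with the lower endpoint $\lambda_i(A)+\lambda_1(B)$ and the sign of $\lambda_1(B)$ doing all the work. Your care in confirming the ordering convention (so that $\lambda_1(B)$ is indeed the smallest eigenvalue) is exactly the right check, and the strict case follows just as you say.
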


\begin{lemma}\label{lem:eigaddorthog}
Let $ f$ be an eigenvector of $A$ with associated eigenvalue $\lambda$.   Let $B$ be a matrix of the same size as $A$ with the property that $B f=0$.  Then $ f$ is an eigenvector of $A+B$ with eigenvalue $\lambda$.
\end{lemma}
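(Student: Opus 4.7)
The plan is to verify the eigenvalue equation directly from the hypotheses. By assumption, $f$ satisfies $Af = \lambda f$, and $B$ annihilates $f$, i.e.\ $Bf = 0$. These two pieces combine via linearity of matrix--vector multiplication into a single computation: $(A+B)f = Af + Bf$.

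First I would substitute the hypotheses into the right-hand side, obtaining $(A+B)f = \lambda f + 0 = \lambda f$. Second I would note that $f \neq 0$ (being an eigenvector of $A$), so this displays $f$ as an eigenvector of $A+B$ with eigenvalue $\lambda$, which is exactly the conclusion.

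There is essentially no obstacle here: the lemma is a one-line consequence of the definitions and distributivity, and no appeal to Weyl's inequality or to positive semidefiniteness is required. The only thing worth flagging is that the statement does not require $B$ to be Hermitian or of any special sign, only that it kill the specific vector $f$; this is what makes the lemma a useful companion to Corollary~\ref{cor:Weyl} in the sequel, since perturbations of the form $B = s^2 g g^*$ with $g \perp f$ leave the eigenpair $(\lambda, f)$ untouched while potentially raising other eigenvalues.
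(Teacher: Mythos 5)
Your proof is correct: the paper states Lemma~\ref{lem:eigaddorthog} without proof, treating it as immediate, and your one-line computation $(A+B)f = Af + Bf = \lambda f + 0 = \lambda f$ together with the observation that $f \neq 0$ is exactly the intended argument. Your remark that $B$ need not be Hermitian or positive semidefinite, only annihilate $f$, is also accurate and consistent with how the lemma is used later in the proof of the condition-number reduction theorem.
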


\begin{lemma}[{\cite[Section 1.3]{Tao2012}}]
Let $A$ and $B$ be two $N\times N$ Hermitian matrices of same size. Then for any $i=1,...,N$, the mapping $t\mapsto \lambda_i(A+tB)$ is Lipschitz continuous with Lipschitz constant $\norm{B}_2$.  
\end{lemma}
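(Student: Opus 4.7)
The plan is to deduce this Lipschitz estimate as a direct corollary of Weyl's inequality, which was already cited above. The key observation is that for any two real parameters $s,t$, we can write $A+tB = (A+sB) + (t-s)B$, and then apply Weyl's inequality with the ``base'' matrix taken to be $A+sB$ and the ``perturbation'' taken to be $(t-s)B$, both of which are Hermitian.

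More concretely, first I would fix $i \in \{1,\ldots,N\}$ and arbitrary real numbers $s,t$. Writing $A+tB = (A+sB) + (t-s)B$ and invoking Weyl's inequality, we obtain
\[
\lambda_i(A+tB) \;\in\; \bigl[\,\lambda_i(A+sB) + \lambda_1((t-s)B),\;\; \lambda_i(A+sB) + \lambda_N((t-s)B)\,\bigr].
\]
Rearranging, this gives $\lambda_i(A+tB) - \lambda_i(A+sB) \in [\lambda_1((t-s)B),\lambda_N((t-s)B)]$. The next step is to control the two endpoints of this interval. Since $(t-s)B$ is Hermitian, its eigenvalues are exactly $(t-s)\lambda_j(B)$ for $j=1,\ldots,N$, so both $|\lambda_1((t-s)B)|$ and $|\lambda_N((t-s)B)|$ are bounded above by $|t-s|\cdot\max_j |\lambda_j(B)|$. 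The final ingredient is the standard identity $\|B\|_2 = \max_j |\lambda_j(B)|$ that holds for Hermitian $B$ (the operator norm equals the spectral radius).

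Combining these observations yields $|\lambda_i(A+tB) - \lambda_i(A+sB)| \leq |t-s|\cdot\|B\|_2$, which is precisely Lipschitz continuity with constant $\|B\|_2$. Since $s$ and $t$ were arbitrary, this completes the argument. There is no real obstacle here: the only subtlety is keeping track of the correct sign conventions in Weyl's bound (remembering that $\lambda_1$ denotes the smallest eigenvalue and can be negative), but once one writes out the two-sided inclusion as above, the estimate is immediate from the Hermiticity of $B$.
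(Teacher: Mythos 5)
Your argument is correct. Note that the paper does not actually supply a proof of this lemma --- it is stated as a citation to Section~1.3 of Tao's book --- so there is no in-paper argument to compare against. Your derivation is the standard one and is fully valid: writing $A+tB = (A+sB) + (t-s)B$ and applying Weyl's inequality gives the two-sided inclusion $\lambda_i(A+tB) - \lambda_i(A+sB) \in [\lambda_1((t-s)B), \lambda_N((t-s)B)]$, and since $B$ is Hermitian both endpoints are bounded in absolute value by $|t-s|\,\max_j|\lambda_j(B)| = |t-s|\,\norm{B}_2$. You correctly flag the only subtle point, namely that $t-s$ may be negative (which permutes the ordered spectrum of $(t-s)B$), and your bound on both endpoints simultaneously sidesteps this. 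The argument also fits naturally into the paper's structure, since Weyl's inequality is the lemma stated immediately beforehand.
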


\begin{cor}\label{cor:Lipschitzeig}
Let $A$ be an $N\times N$ Hermitian matrix with simple spectrum and minimum eigengap $\delta>0$, i.e.,
$$\delta=\min_{i\neq j}|\lambda_i-\lambda_j|.$$
Let $B$ be a non-negative Hermitian matrix of same size as $A$.
Then the mappings $t\mapsto \lambda_i(A+tB)$ are interlacing:
\[ \lambda_1(A)\leq \lambda_1(A+tB) \leq \lambda_2(A)\leq \lambda_2(A+tB)\leq \cdots \leq \lambda_{N-1}(A+tB) \leq \lambda_N(A) \leq \lambda_N(A+tB) \]
for $t\in(0,\frac{\delta}{\norm{B}_2})$. 
\end{cor}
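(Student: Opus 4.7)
The plan is to obtain the two inequalities in the interlacing chain separately, each from one of the two perturbation results just stated. The lower bounds $\lambda_i(A)\le \lambda_i(A+tB)$ are immediate from Corollary~\ref{cor:Weyl} applied with the positive semidefinite perturbation $tB$ (here $t\ge 0$ and $B\succeq 0$, so $tB\succeq 0$). So the only real content lies in the upper bounds $\lambda_i(A+tB)\le \lambda_{i+1}(A)$, which I would extract from the Lipschitz lemma.

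For that upper bound, I would apply the Lipschitz lemma to the function $s\mapsto \lambda_i(A+sB)$ on the interval $[0,t]$, giving
\[
\lambda_i(A+tB)-\lambda_i(A)\;\le\;t\,\norm{B}_2.
\]
Since $t<\delta/\norm{B}_2$, the right-hand side is strictly less than $\delta$, and the definition of $\delta$ as the minimum eigengap of $A$ together with the simple-spectrum hypothesis gives $\lambda_{i+1}(A)-\lambda_i(A)\ge \delta$. Chaining these,
\[
\lambda_i(A+tB)\;<\;\lambda_i(A)+\delta\;\le\;\lambda_{i+1}(A),
\]
which is the desired interlacing inequality. Combining with the Weyl bound gives the full chain stated in the corollary. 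The corner case $\norm{B}_2=0$ (so $B=0$) is trivial since then $A+tB=A$ and the interlacing chain collapses to a chain of equalities.

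There is not really a single hard step here; the corollary is a packaging of the two preceding results. The one mild subtlety to flag is that the Lipschitz statement must be applied to the \emph{ordered} eigenvalue function $\lambda_i(\cdot)$ (rather than to eigenvalues followed along some continuous labeling), so one should verify that the lemma as invoked is the version for the sorted eigenvalues. Given the statement of the Lipschitz lemma as quoted from \cite{Tao2012}, this is exactly what is provided, so no additional argument is needed beyond the two-line chain above.
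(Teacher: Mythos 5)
Your proof is correct, and it is exactly the argument the paper intends: the paper actually states Corollary~\ref{cor:Lipschitzeig} without proof, as an immediate consequence of the two preceding results, and your two-line chain (Weyl/Corollary~\ref{cor:Weyl} for the lower bounds $\lambda_i(A)\le\lambda_i(A+tB)$, the Lipschitz lemma plus $t\nm{B}_2<\delta\le\lambda_{i+1}(A)-\lambda_i(A)$ for the upper bounds) is the intended justification. Your remark about applying the Lipschitz statement to the sorted eigenvalue functions is a worthwhile point of care, and the $B=0$ corner case is handled appropriately.
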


The following theorem gives conditions in which we can guarantee that the condition number of  frame can be reduced.
\begin{thm}
Let $F=\{f_i\}_{i=1}^m\subseteq \C^d$ be a frame that is not tight and whose frame operator has simple spectrum with minimal eigengap $\delta>0$.  Suppose that there exists some index $k$ such that $f_k$ is orthogonal to the eigenspace corresponding to $\lambda_{\max}(FF^*)$ and not orthogonal to the eigenspace corresponding to $\lambda_{\min}(FF^*)$.  Then there exists a rescaled frame $\tilde F=\{s_i f_i\}_{i=1}^m$ satisfying $\kappa(\tilde F)<\kappa(F)$.  In particular, one scaling that decreases the condition number is 
$$s_i=\left\{\begin{array}{l l} 
\frac{m}{m-1+\sqrt{1+\gamma}},& \text{ for }i\neq k\\
\frac{m\sqrt{1+\gamma}}{m-1+\sqrt{1+\gamma}}, & \text{ for }i=k\end{array}\right.$$
for $\gamma\in(0,\delta\norm{f_k}^{-2})$.
\end{thm}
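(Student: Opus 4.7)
The plan is to reduce the problem to a single rank-one perturbation analysis. Set $c=\frac{m}{m-1+\sqrt{1+\gamma}}$, so that the prescribed weights satisfy $s_i^2=c^2$ for $i\neq k$ and $s_k^2=c^2(1+\gamma)$. Writing $S=FF^*$ and $B=f_kf_k^*$, a direct computation gives $\tilde S=\sum_{i=1}^m s_i^2 f_i f_i^*=c^2(S+\gamma B)$. Since multiplication by the positive scalar $c^2$ preserves the condition number, it suffices to prove $\kappa(S+\gamma B)<\kappa(S)$ whenever $\gamma\in(0,\delta\|f_k\|^{-2})$.

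The second step is to show that the top eigenvalue is unchanged. Let $E=\ker(S-\lambda_{\max}(S)I)$, which is one-dimensional by the simple-spectrum hypothesis. Because $f_k\perp E$, Lemma \ref{lem:eigaddorthog} shows that every vector of $E$ remains an eigenvector of $S+\gamma B$ at eigenvalue $\lambda_{\max}(S)$; moreover both $S$ and $B$ leave $E$ and $E^\perp$ invariant, so $S+\gamma B$ splits as a direct sum with respect to this decomposition. On $E^\perp$ the spectrum of $S|_{E^\perp}$ consists of the remaining eigenvalues of $S$, bounded above by $\lambda_{\max}(S)-\delta$ thanks to the eigengap hypothesis. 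Weyl's inequality applied to this block yields
\[
\lambda_{\max}\bigl((S+\gamma B)|_{E^\perp}\bigr)\le \lambda_{\max}(S)-\delta+\gamma\|f_k\|^2<\lambda_{\max}(S),
\]
using $\gamma<\delta\|f_k\|^{-2}$. Therefore $\lambda_{\max}(S+\gamma B)=\lambda_{\max}(S)$.

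The third step is to show that the bottom eigenvalue moves strictly upward. Corollary \ref{cor:Weyl} gives $\lambda_{\min}(S+\gamma B)\ge\lambda_{\min}(S)$, and for strict inequality I would argue by contradiction: if equality held, a unit eigenvector $w$ with $(S+\gamma B)w=\lambda_{\min}(S)w$ would satisfy
\[
\lambda_{\min}(S)=\langle w,Sw\rangle+\gamma|\langle f_k,w\rangle|^2.
\]
Since $\langle w,Sw\rangle\ge\lambda_{\min}(S)$ and the second summand is nonnegative, both must attain their minima simultaneously. The first condition forces $w$ into the one-dimensional $\lambda_{\min}(S)$-eigenspace, while the second gives $\langle f_k,w\rangle=0$, contradicting the hypothesis that $f_k$ is not orthogonal to this eigenspace. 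Combining with Step 2 yields $\kappa(S+\gamma B)=\lambda_{\max}(S)/\lambda_{\min}(S+\gamma B)<\kappa(S)$, as required.

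The crux of the argument, and what I expect to be the main obstacle, is Step 2: preventing the top eigenvalue from increasing. A naive Weyl bound on the whole space would only give $\lambda_{\max}(S+\gamma B)\le\lambda_{\max}(S)+\gamma\|f_k\|^2$, which is never sharp enough. The orthogonality $f_k\perp E$ together with the simple-spectrum hypothesis are used precisely to confine the perturbation to $E^\perp$, where Weyl is measured against the strictly smaller $\lambda_{\max}(S)-\delta$, and the specified range $\gamma\in(0,\delta\|f_k\|^{-2})$ is exactly what is needed to beat that bound.
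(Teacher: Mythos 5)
Your proof is correct, and in its key step it takes a genuinely different route from the paper. The paper also works with the rank-one perturbation $FF^*+\gamma f_kf_k^*$, but it controls the top eigenvalue by invoking the interlacing statement of Corollary \ref{cor:Lipschitzeig} together with Lemma \ref{lem:eigaddorthog}: since $\lambda_{\max}(FF^*)$ survives as an eigenvalue of the perturbed operator and interlacing caps the $(N-1)$-st eigenvalue at $\lambda_{\max}(FF^*)$, the top eigenvalue is pinned. You instead split the space as $E\oplus E^{\perp}$ (valid, since $f_k\perp E$ makes both $S$ and $f_kf_k^*$ reduce this decomposition) and apply Weyl only on the $E^{\perp}$ block, where the eigengap $\delta$ gives you the room $\gamma\|f_k\|^2<\delta$; this is a more self-contained argument that does not need the interlacing corollary at all, only Weyl's inequality and the simple-spectrum/eigengap hypotheses. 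Your Step 3 is actually an improvement on the paper: the paper's displayed computation only yields $\kappa(HH^*)\le\kappa(FF^*)$ and never explicitly uses the hypothesis that $f_k$ is \emph{not} orthogonal to the $\lambda_{\min}$-eigenspace, whereas your variational contradiction argument is exactly what is needed to upgrade $\lambda_{\min}(S+\gamma B)\ge\lambda_{\min}(S)$ to a strict inequality and hence to the strict conclusion $\kappa(\tilde F)<\kappa(F)$ claimed in the theorem. One cosmetic remark: the final renormalization by $m(m-1+\sqrt{1+\gamma})^{-1}$, which the paper includes to preserve $\sum_i s_i=m$, is handled implicitly in your Step 1 by the observation that scaling by $c^2>0$ leaves $\kappa$ unchanged, so nothing is missing.
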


\begin{proof}
Let $f_k$ denote the frame element as described in the assumptions in the statement of the theorem.  For $\gamma\in(0,\delta)$, consider the frame operator $HH^*=FF^*+\gamma f_kf_k^*$ which corresponds to the rescaled frame of $F$ where each scale $s_i=1$ except for $s_k=\sqrt{1+\gamma}$.   The matrix $f_kf_k^*$ is Hermitian and positive semidefinite so by Corollary \ref{cor:Weyl}, we have $\lambda_i(FF^*)\leq \lambda_i(HH^*)$ for every $i=1,...,N$.
Then by Corollary \ref{cor:Lipschitzeig}, the eigenvalues of the frame operator $HH^*$ satisfies the following interlacing property:
$$\lambda_1(FF^*)\leq \lambda_1(HH^*)\leq \lambda_2(FF^*) \leq \lambda_2(HH^*)\leq \cdots\leq \lambda_N(FF^*)= \lambda_N(HH^*),$$
where the last equality follows from Lemma \ref{lem:eigaddorthog} and the fact that $f_k$ is orthogonal to the eigenspace corresponding to $\lambda_N(FF^*)$.

We can now compute
$$\kappa(FF^*)=\frac{\lambda_N(FF^*)}{\lambda_1(FF^*)} \geq \frac{\lambda_N(HH^*)}{\lambda_1(HH^*)} = \kappa(HH^*).$$

Finally, we renormalize the scales $\{s_i\}$ by the constant factor $m(m-1+\sqrt{1+\gamma})^{-1}$ to preserve the property that $\sum_{i=1}^m s_i =m$.  This renormalization scales all eigenvalues by the same factor which leaves the condition number unchanged.  The frame $$\tilde F = \frac{m}{m-1+\sqrt{1+\gamma}} H$$ is the frame described in the statement of the theorem, which concludes the proof.
\end{proof}

\begin{rem}
Having discussed the equivalence between the formulations above, we have seen that they do not necessarily produce similar solutions.
This brings the question of which formulation we should use in general, to the forefront. One could answer this question by seeking a metric that  best describes the distance of a frame to  the set of tight frames.  
This is similar to the Paulsen problem \cite{CahillCasazza13}, in that, after we have solved one of the formulations above, we produce a scaling and subsequent new frame and wish to determine the distance of this new frame to the canonical  Parseval frame associated to our original frame.
In \cite{ChenKOPWscalable}, the question of distance to Parseval frames was generalized to include frames that could be made tight with a diagonal scaling, resulting in the distance between a frame and the set of scalable frames: 
\begin{equation}\label{sec3_eq1}
d_{F} = \min_{\Psi\in\mathcal{SC}(M,N)} \|F - \Psi\|_{F}.
\end{equation}
However, due to the fact that the topology of the set of scalable frames $\mathcal{SC}(M,N)$ is not yet well-understood, computing $d_F$ is almost impossible for a non-scalable frame. A source of future work involves finding bound on $d_F$ using the optimal solutions to the three problems we stated above to analyze and produce bounds on the minimum distance.
\end{rem}

\section{Minimizing condition number of graphs}\label{sec3}
In this section we outline how to apply and generalize the problems the optimization problems from Section \ref{sec2} in the setting of (finite) graph Laplacians.  This task is not a simply as directly applying the condition number minimization problem \eqref{eq1}, and the others, with graph Laplacian operators.  

Recall that any finite graph has a corresponding positive semidefinite Laplacian matrix with eigenvalues $\{\lambda_k\}_{k=0}^{N-1}$ and eigenvectors $\{ f_k\}_{k=0}^{N-1}$.  Further any graph has smallest eigenvalue $\lambda=0$ with multiplicity equal to number of connected components in the graph with eigenvalues equal to constant functions supported on those connected components.  Because any Laplacian's smallest eigenvalue equals 0, its condition number $\kappa(L)$ is undefined.  For simplicity, let us assume that all graphs in this section are connected and hence $0=\lambda_0<\lambda_1\leq\lambda_2\leq \cdots \leq \lambda_{N-1}$.  Suppose we restricted the Laplacian operator to the $(N-1)$-dimensional space spanned by the eigenvectors $ f_1,..., f_{N-1}$.  Then this new operator, call it $L_0$, has eigenvalues $\lambda_1,...,\lambda_{N-1}$ which are all strictly positive.  Now, $\kappa(L_0)$, the condition number of $L_0$ is a well-defined number.

Recall that the complete graph on $N$ vertices, $K_N$, is the most connected a graph on $N$ vertices can be since one can traverse from any two vertices on precisely one edge.  It is the only graph that has all nonzero eigenvalues equal, i.e., $\lambda_0=0$ and $\lambda_1=\lambda_2=\cdots=\lambda_{N-1}=N-1$.  This graph achieves the highest possible algebraic connectivity, $\lambda_1$, of a graph on $N$ vertices.  If we create $L_0$ by projecting the Laplacian of $K_N$ onto the $N-1$-dimensional space spanned by the eigenvectors corresponding with nonzero eigenvalue then $L_0$ equals $N I_{N-1}$, that is a the $(N-1)\times (N-1)$ identity matrix times $N$.
\begin{lemma}
Let $G$ be a connected graph with eigenvalues $\{\lambda_k\}_{k=0}^{N-1}$ and eigenvectors $\{ f_k\}_{k=0}^{N-1}$ of the graph Laplacian $L$.  Let
$\tilde{F}=[ f_1\,  f_2\, \cdots\,  f_{N-1}]$ be the $N\times (N-1)$ matrix of eigenvectors excluding the constant vector $ f_0$.  Then the $(N-1)\times(N-1)$ matrix 
\begin{equation}L_0=\tilde F^* L \tilde F\label{L0def}\end{equation} has eigenvalues $\{\lambda_k\}_{k=1}^{N-1}$ and associated orthonormal eigenvectors $\{\tilde F^* f_k\}_{k=1}^{N-1}$.
\end{lemma}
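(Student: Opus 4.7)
The plan is a direct computation exploiting the fact that $\tilde F$ has orthonormal columns. Since $L$ is symmetric, the spectral theorem guarantees that the eigenvectors $\{f_k\}_{k=0}^{N-1}$ can be chosen orthonormal, so $\tilde F = [f_1\,f_2\,\cdots\,f_{N-1}]$ satisfies $\tilde F^* \tilde F = I_{N-1}$. In particular, for any $k \in \{1,\ldots,N-1\}$ the vector $\tilde F^* f_k \in \R^{N-1}$ is simply the $k$-th standard basis vector $e_k$, since its $j$-th entry is $\langle f_j,f_k\rangle = \delta_{jk}$.

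Next I would compute $L\tilde F$ column-by-column. Using $L f_k = \lambda_k f_k$, the columns of $L\tilde F$ are $\lambda_k f_k$ for $k=1,\ldots,N-1$. Left-multiplying by $\tilde F^*$ and using the identity $\tilde F^* f_k = e_k$ from the previous step gives
\[
L_0 = \tilde F^* L \tilde F = \mathrm{diag}(\lambda_1,\lambda_2,\ldots,\lambda_{N-1}).
\]
From this diagonal form, both assertions of the lemma can be read off immediately: the eigenvalues of $L_0$ are exactly $\lambda_1,\ldots,\lambda_{N-1}$, with associated orthonormal eigenvectors the standard basis vectors $e_1,\ldots,e_{N-1}$, which as noted above coincide with the vectors $\tilde F^* f_k$.

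There is essentially no obstacle here; the only point worth stating carefully is the identification $\tilde F^* f_k = e_k$, since without that step one might worry about how the orthonormality claim interacts with the ambient $N$-dimensional space vs.\ the reduced $(N-1)$-dimensional space on which $L_0$ acts. Once that identification is made, orthonormality of $\{\tilde F^* f_k\}_{k=1}^{N-1}$ is immediate from $\tilde F^*\tilde F = I_{N-1}$, and the eigenvalue/eigenvector relation follows from the diagonalization above.
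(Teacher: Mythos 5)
Your proof is correct and follows essentially the same direct computation as the paper: both arguments rest on the orthonormality of the columns of $\tilde F$, the paper working with the projector $\tilde F\tilde F^*$ to verify $L_0(\tilde F^* f_k)=\lambda_k(\tilde F^* f_k)$ directly, and you working with $\tilde F^*\tilde F=I_{N-1}$ to exhibit $L_0$ explicitly as $\mathrm{diag}(\lambda_1,\ldots,\lambda_{N-1})$. Your observation that $\tilde F^* f_k=e_k$ makes the conclusion slightly more transparent, but the underlying argument is the same.
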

\begin{proof}
We first show that $\{\tilde F^* f_k\}_{k=1}^{N-1}$ are eigenvectors to $L_0$ with eigenvalues $\lambda_k$.  For any $k=1,...,N-1$ we have 
$$L_0\tilde F^* f_k=\tilde F^*L\tilde F\tilde F^* f_k.$$
But since $\tilde F$ is an orthonormal basis for the eigenspace that its vectors span, then $\tilde F\tilde F^*$ is simply the orthogonal projection onto the eigenspace spanned by $\{ f_1,..., f_{N-1}\}$.  That is, for any vector $f$, we have $\tilde F\tilde F^*f=f-\langle f, f_0\rangle f_0$, which is simply the function $f$ minus its mean value.  For each $k=1,...,N-1$, the eigenvectors $ f_k$ have zero mean, i.e., $\langle  f_k, f_0\rangle=0$.  Hence $\tilde F\tilde F^* f_k= f_k$ and therefore
$$L_0\tilde F^* f_k=\tilde F^*L f_k=\tilde F^*(\lambda_k f_k)=\lambda_k\tilde F^* f_k.$$

The orthonormality of the eigenvectors $\{\tilde F^* f_k\}_{k=1}^{N-1}$ follows directly from the orthonormality of $\{ f_k\}_{k=0}^{N-1}$ and the computation
$$\langle \tilde F^* f_k, \tilde F^* f_j\rangle=(\tilde F^* f_k)^*\tilde F^* f_j= f_k^*\tilde F\tilde F^* f_j= f_k^* f_j=\delta(k,j).$$
\end{proof}
Unlike the Laplacian, the operator in \eqref{L0def} is full rank and its rank equals the rank of the Laplacian.  We denote it $L_0$ because it behaves as the Laplacian after the projection of the function onto the zero'th eigenspace is removed.

For a general finite graph, the Laplacian can be written as the sum of rank-one matrices $L=\sum_{i=1}^m v_i v_i^*$ where $v_i$ is the $i$'th column in the incidence matrix $B$ associated to the $i$'th edge in the graph and $m$ is the total number of edges in the graph.  Thus, the Laplacian can be formed by the product $L=BB^*$.  The columns of the incidence matrix, $B$, as vectors in $\mathbb{\R}^N$ do not form a frame; $B$ has rank $N-1$.  However, the restriction $B$ to the $(N-1)$-dimensional space spanned by $ f_1,...., f_{N-1}$, call it $B_0$, is a frame in that space.  Then the methods of Section \ref{sec2} do apply to the frame $B_0$ with corresponding frame operator $L_0=B_0B_0^*$.  Therefore the operator $L_0$ can also be written as one matrix multiplication $L_0=(\tilde F^* B)(\tilde F^* B)^*$.  For other related results on graphs and frames  we refer to \cite{PenI08}. 
 
We seek scalars $s_i\geq 1$ so that the rescaled frame $\{s_i\tilde F^*v_i\}_{i=1}^m$ is tight or as close to tight as possible.  In terms of matrices, we seek a nonnegative diagonal matrix $X=\operatorname{diag}(s_i)$ so that $\tilde L_0:=\tilde F^*BX^2B^*\tilde F$ has minimal condition number.  The resulting graph Laplacian, denoted $\tilde L_\kappa = B X^2 B^*$, is the operator with minimal condition number, $\tilde L_0$, without the projection onto $(N-1)$ eigenspaces, thus acting on the entire $N$-dimensional space.  One can interpret this problem as rescaling weights of graph edges to not only make $\tilde{L_0}$ as close as possible to the $(N-1)$-identity matrix but also make the $N\times N$ Laplacian, $\tilde L$, as close to possible as the Laplacian of the complete graph $K_N$.

We present the pseudocode for the algorithm, \url{GraphCondition}, that produces $\tilde L_\kappa$, the Laplacian of the graph that minimizes the condition number of $L$.  
\\
\fbox{%
\parbox{\textwidth}{
$L_\kappa$=\url{GraphCondition}$(L, F,B)$\\
where $L$ is the Laplacian matrix of the graph $G$,\\
$ F$ is the $N\times N$ eigenvector matrix of $L$\\
$B$ is the incidence matrix of $L$.

\begin{enumerate}
\item Set $\tilde F= F(:,2:N)$.
\item Use \url{cvx} to solve for $X$ that minimizes $\lambda_{\max}(\tilde F^*BX^2B^*\tilde F)$. \\
subject to:  $X\succeq0$ is diagonal, $\operatorname{trace}(X)\geq t\geq 0$, and $\tilde F^*BX^2B^*\tilde F\succeq I$.
\item Create $L_\kappa= BX^2B^*$.
\end{enumerate}
}
}

\begin{example}\label{ex:cluster}
We consider the barbell graph $G$ which consists of two complete graphs on 5 vertices that are connected by exactly one edge.  The Laplacian for $G$ has eigenvalues $\lambda_1\approx 0.2984$ and $\lambda_{9}\approx 6.7016$, thus giving a condition number of $\kappa(G)\approx 22.45$.  We rescale the edges via the \url{GraphCondition} algoritihm and obtained a rescaled weighted graph $\tilde G_\kappa$ which has eigenvalues $\lambda_1\approx 0.3900$ and $\lambda_{10}\approx 6.991$, thus giving a condition number $\kappa(\tilde G_\kappa)\approx17.9443$.  

Both graphs, $G$ and $\tilde G_\kappa$, are shown in Figure \ref{fig:completeclustercondition}.  The edge bridging the two complete clusters is assigned the highest weight of 1.8473.  All other edges eminating from those two vertices are assigned the smallest weights of 0.7389.  All other edges not connected to either of the two ``bridge" vertices are assigned a weight of 1.1019.

\begin{figure}[htbp]
\begin{center}
\includegraphics[scale=.5]{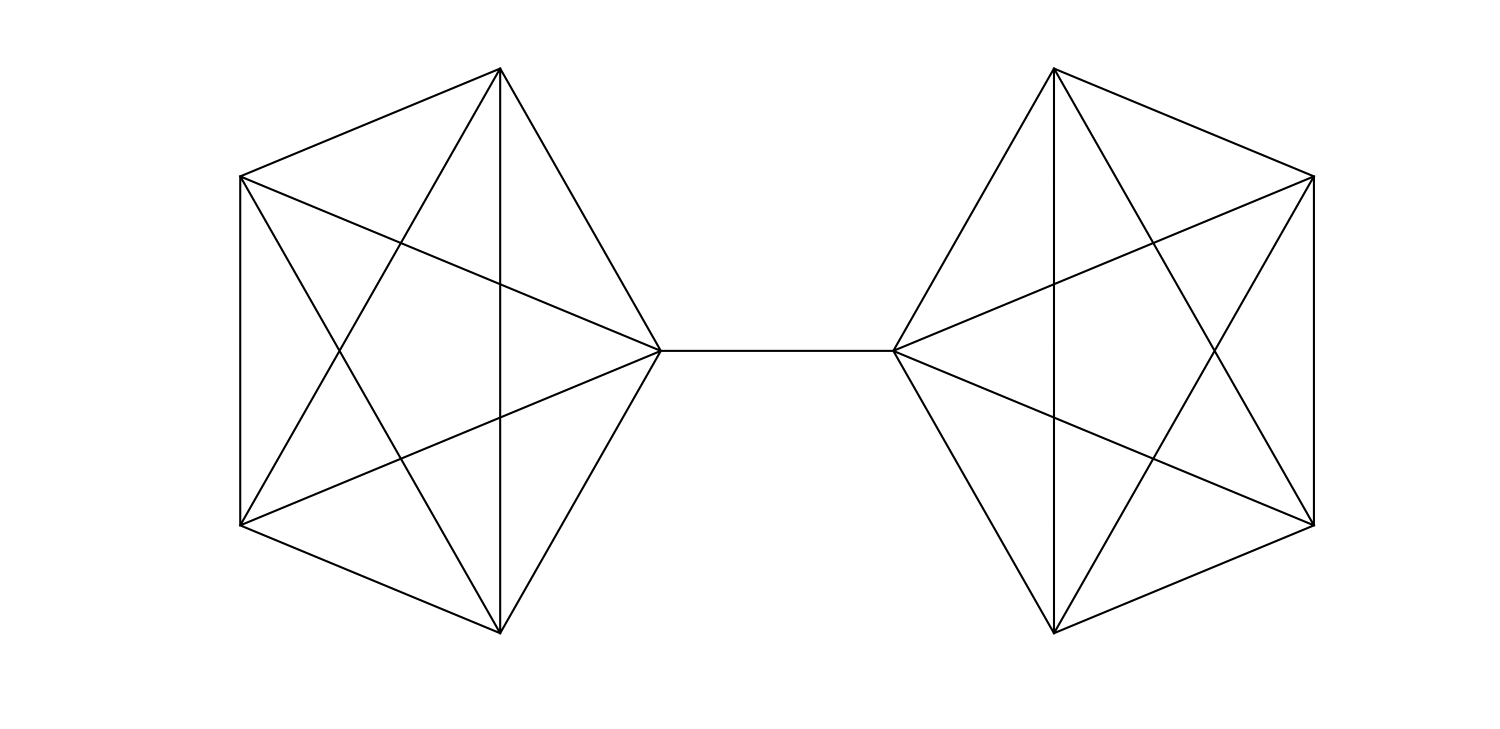}\\
\includegraphics[scale=.5]{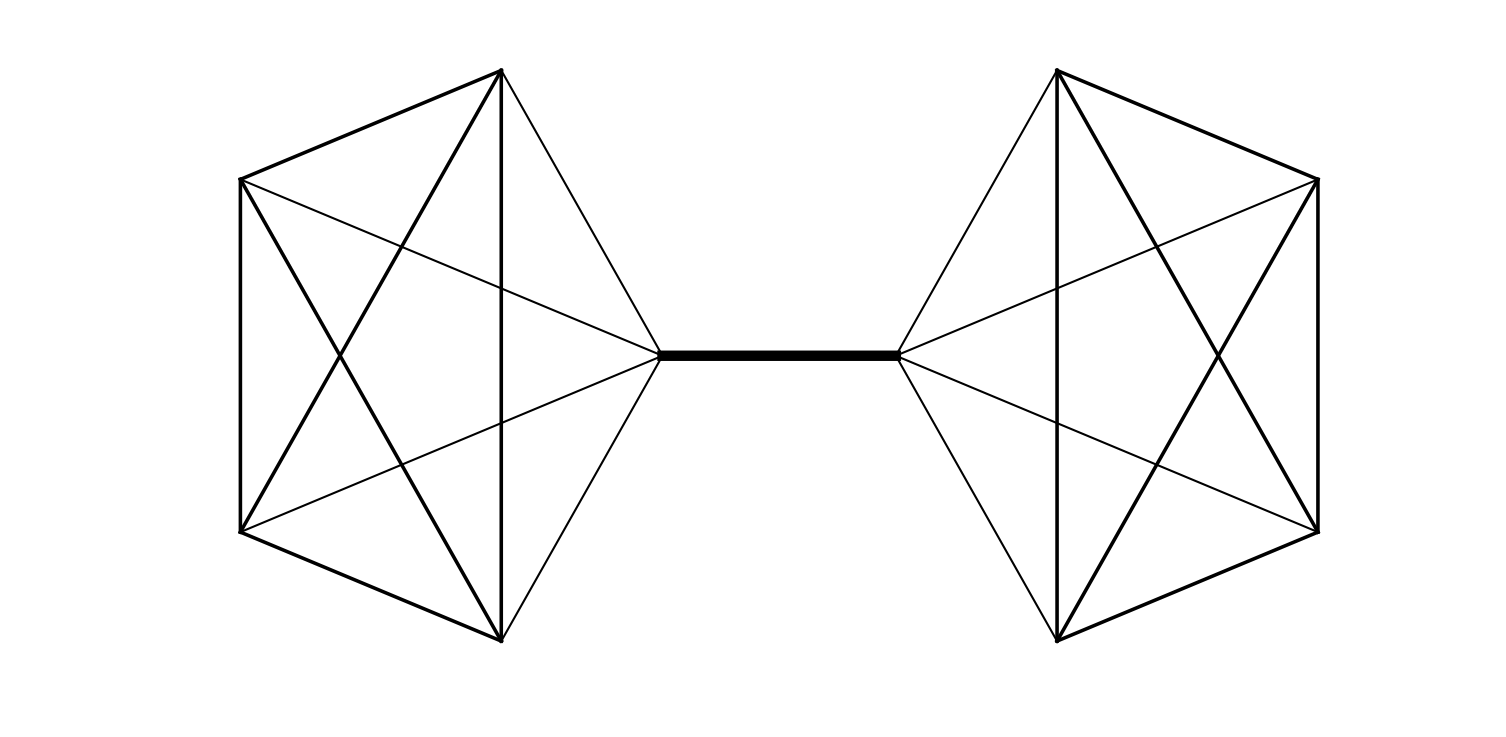}
\end{center}
\caption{Top: The barbell graph $G$.  Bottom: The condiitoned graph with rescaled weights that minimizes the condition number. The width of the edges are drawn to be proportional to the weight assigned to that edge.}\label{fig:completeclustercondition}
\end{figure}
\end{example}

We show in the following example that the scaling coefficients $\{s_i\}_{i=1}^m$ that minimize the condition number of a graph are not necessarily unique.
\begin{example}
Consider the graph $G$ complete graph on four nodes with the edge $(3,4)$ removed.  Then $G$ was rescaled and conditioned via \url{GraphCondition}; both graphs are shown in Figure \ref{fig:conditionnotunique}.  The orignal Laplacian, $L$, and the rescaled conditioned Laplacian, $\tilde L_\kappa$, produced by the \url{GraphCondition} algorithm are given as
$$L=\left[\begin{array}{cccc}3 & -1 & -1 & -1 \\-1 & 3 & -1 & -1 \\-1 & -1 & 2 & 0 \\-1 & -1 & 0 & 2\end{array}\right],\quad
\tilde L_\kappa\approx\left[\begin{array}{cccc} 2.8406 & -0.6812 & -1.0797 & -1.0797 \\-0.6812 & 2.8406 & -1.0797 & -1.0797 \\-1.0797 & -1.0797 & 2.1594 & 0 \\-1.0797 & -1.0797 & 0 & 2.1594\end{array}\right],$$
with spectra
$$\sigma(L)=\{0,2,4,4\},\quad \sigma(\tilde L_\kappa)=\{0,2.1594, 3.5218,4.3188\}.$$
Both Laplacians have a condition number $\kappa(L)=\kappa(\tilde L_\kappa)=2$ which shows that the scaling of edges that minimize condition number are not necessarily unique. 

\begin{figure}[htbp]
\begin{center}
\includegraphics[scale=.33]{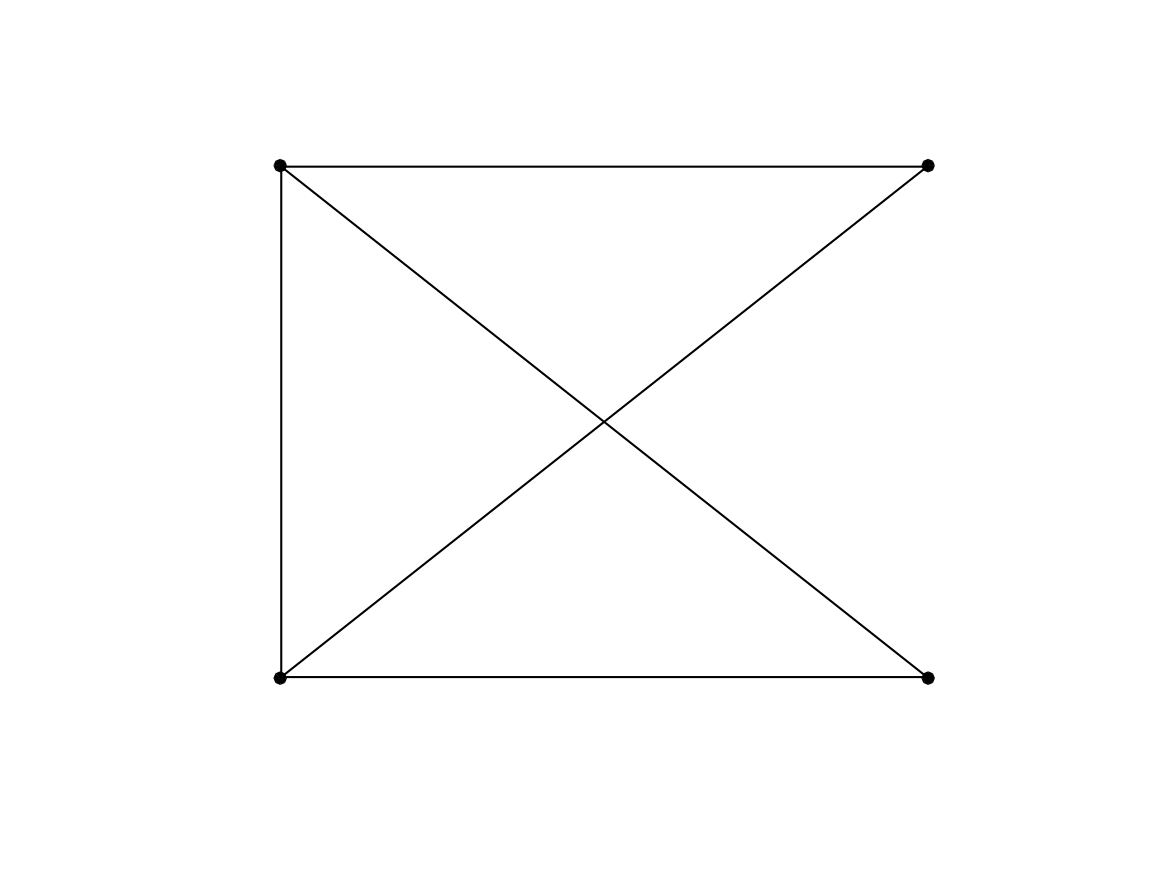}\includegraphics[scale=.33]{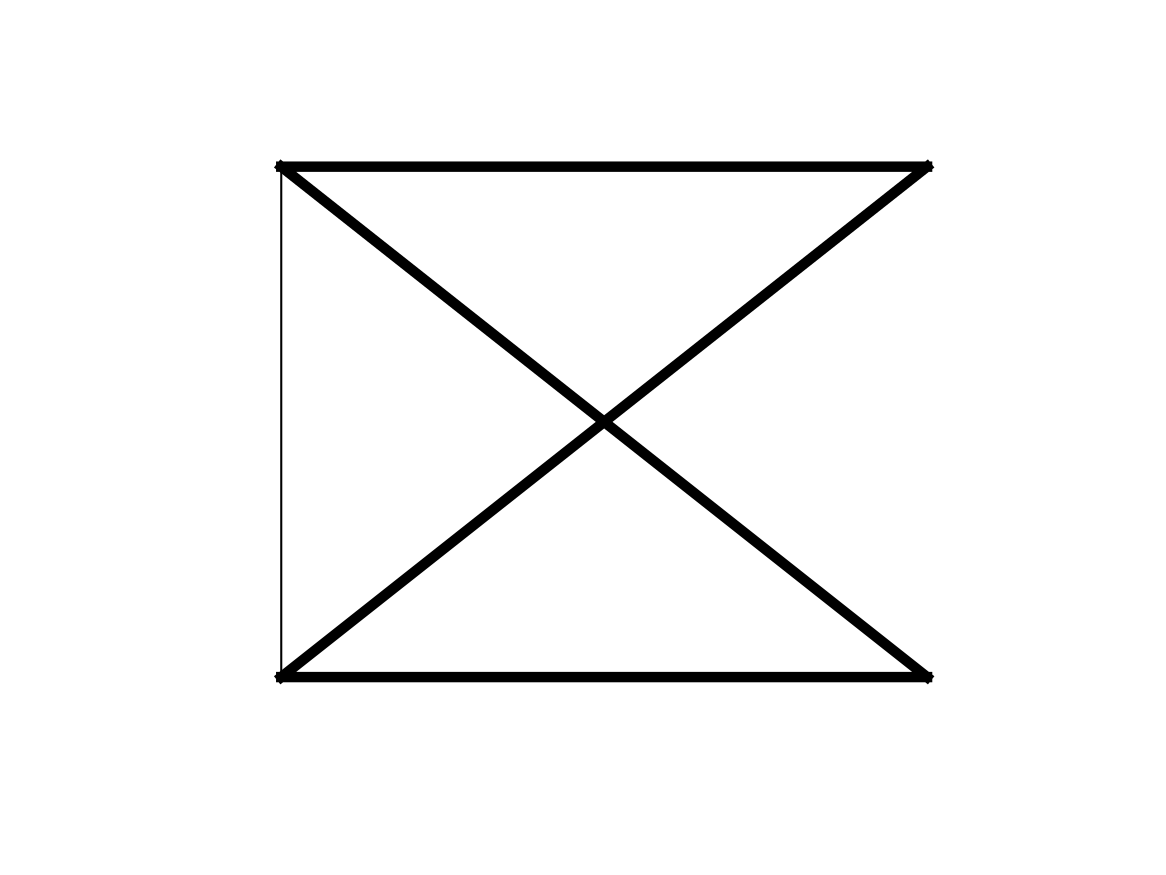}
\end{center}
\caption{The unweighted graph $G$ (left) and its rescaled version $\tilde G_\kappa$ (right) yet both graphs have a condition number equal to 2.}\label{fig:conditionnotunique}
\end{figure}
\end{example}

We prove that the \url{GraphCondition} algorithm will not disconnect a connected graph.
\begin{prop}
Let $G=G(V,E,\omega)$ be a connected graph and let $\tilde G_\kappa=\tilde G_\kappa(V,\tilde E, \tilde \omega)$ be the rescaled version of $G$ that minimizes graph condition number.  Then $\tilde G_\kappa$ is also a connected graph.
\end{prop}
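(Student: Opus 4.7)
The strategy is to translate the connectedness of $\tilde{G}_\kappa$ into a spectral inequality for $\tilde{L}_\kappa$, and then read that inequality directly off the constraint imposed in Step~2 of \url{GraphCondition}. Recall that a weighted graph on $N$ vertices is connected if and only if the second smallest eigenvalue of its Laplacian is strictly positive---equivalently, the Laplacian has one-dimensional kernel spanned by the constant vector $f_0=\mathbf{1}/\sqrt{N}$ and is positive definite on $f_0^{\perp}$.

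First I would verify that $f_0$ lies in the kernel of $\tilde{L}_\kappa=BX^2B^*$ for every nonnegative diagonal $X$, because each column of the incidence matrix $B$ has entries summing to zero, so $B^*f_0=0$. Thus $0$ is automatically an eigenvalue of $\tilde{L}_\kappa$ with the constant eigenvector, and connectivity of $\tilde{G}_\kappa$ reduces to showing that $\tilde{L}_\kappa$ is strictly positive on the orthogonal complement $f_0^{\perp}$.

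Second, I would relate $\tilde{L}_\kappa$ restricted to $f_0^{\perp}$ to the operator $\tilde{L}_0=\tilde{F}^*BX^2B^*\tilde{F}$ that actually appears in the SDP. Since $\tilde{F}$ is an isometric identification of $\R^{N-1}$ with $f_0^{\perp}$, the quadratic forms of $\tilde{L}_\kappa\bigl|_{f_0^{\perp}}$ and $\tilde{L}_0$ coincide under this identification; in particular their smallest eigenvalues agree. The explicit constraint $\tilde{F}^*BX^2B^*\tilde{F}\succeq I$ enforced in Step~2 therefore gives $\lambda_{\min}(\tilde{L}_0)\geq 1$, hence the algebraic connectivity of $\tilde{G}_\kappa$ is at least~$1$, and $\tilde{G}_\kappa$ is connected.

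The main subtlety---essentially the only one---is to ensure that the program in Step~2 is feasible so the algorithm actually returns some $X$. Feasibility is automatic from the standing hypothesis that $G$ is connected: then $L_0$ from \eqref{L0def} is positive definite, and the isotropic scaling $X=c\,I$ satisfies $c^2 L_0\succeq I$ for any $c\geq \lambda_{\min}(L_0)^{-1/2}$. With feasibility established, the spectral chain above delivers the claim without further calculation.
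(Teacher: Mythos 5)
Your proof is correct and rests on the same key observation as the paper's: the constraint $\tilde F^*BX^2B^*\tilde F\succeq I$ in Step~2 of the algorithm forces the restriction of $\tilde L_\kappa$ to $f_0^{\perp}$ to be positive definite, so the zero eigenvalue is simple and $\tilde G_\kappa$ is connected (the paper phrases this as a contradiction: disconnectedness would give a zero eigenvalue of multiplicity at least two, violating the constraint). Your additional verification that the SDP is feasible when $G$ is connected is a small but worthwhile supplement not present in the paper.
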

\begin{proof}
Let $\kappa_0:=\kappa(G)\geq 1$ and suppose that $\tilde G_\kappa$ is disconnected.  This implies that $\tilde G_\kappa$ has eigenvalue 0 with multiplicity at least 2 (one for each of its connected components).  This violates the condition $\tilde F^*BX^2B^*\tilde F\succeq I$ in the \url{GraphCondition} algorithm, which yields the unique minimizer.
\end{proof}

We next consider the analogue of minimizing the spectral gap, $\lambda_{N-1}-\lambda_1$, for graphs.  Just as before with condition number, we create the positive definite matrix $L_0$ and its incidence matrix, $B_0$, and minimize its spectral gap by the methods in Section \ref{sec2} to minimize problem \eqref{eq2}.  We denote the rescaled graph that minimizes the spectral gap by $\tilde G_g$.

\begin{example}
We present numerical results of each of the graph rescaling techniques for the barbell graph shown in Figure \ref{fig:completeclustercondition}.  Each of the rescaled graphs are pictured in Figure \ref{fig:clustergraphs} and numerical data is summarized in Table \ref{tab:clusterdata}.

\begin{figure}[htbp]
\begin{center}
\includegraphics[scale=.35]{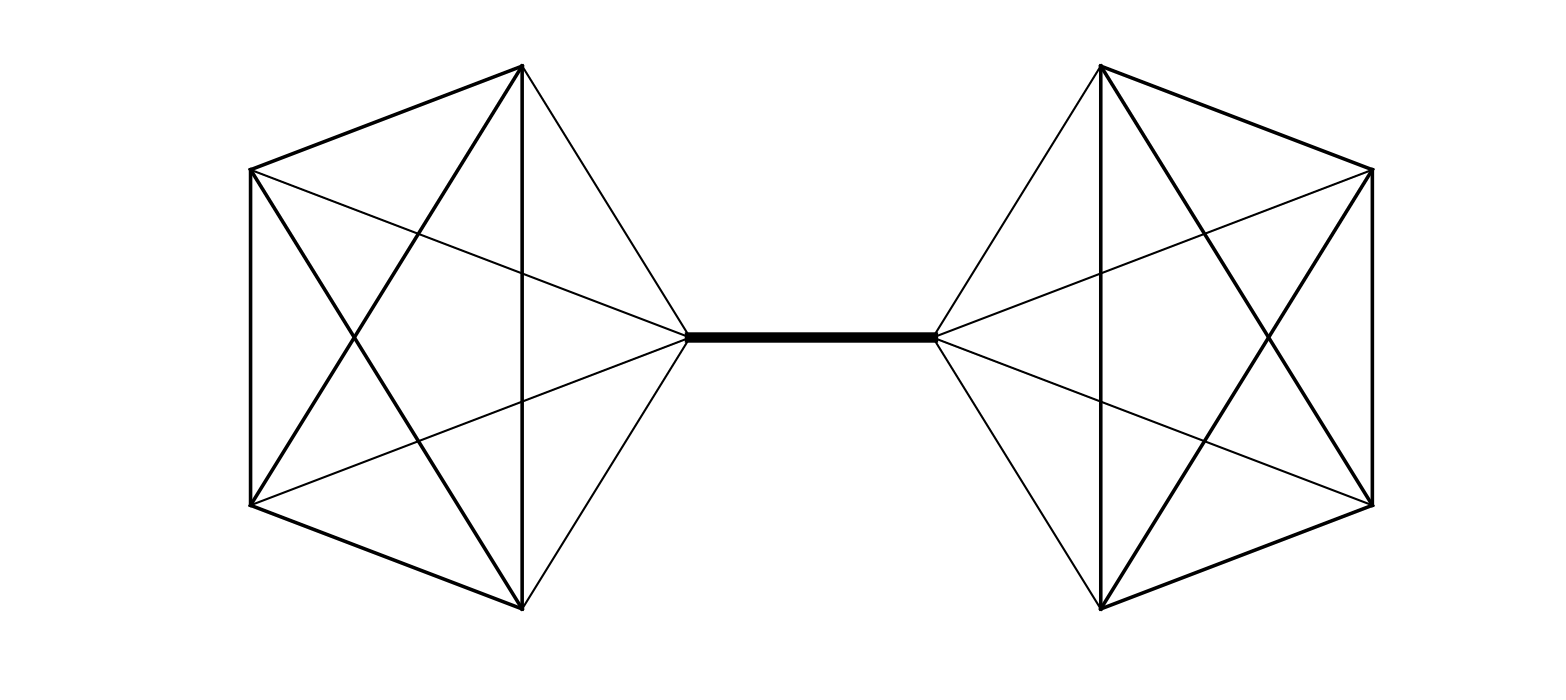}
\includegraphics[scale=.35]{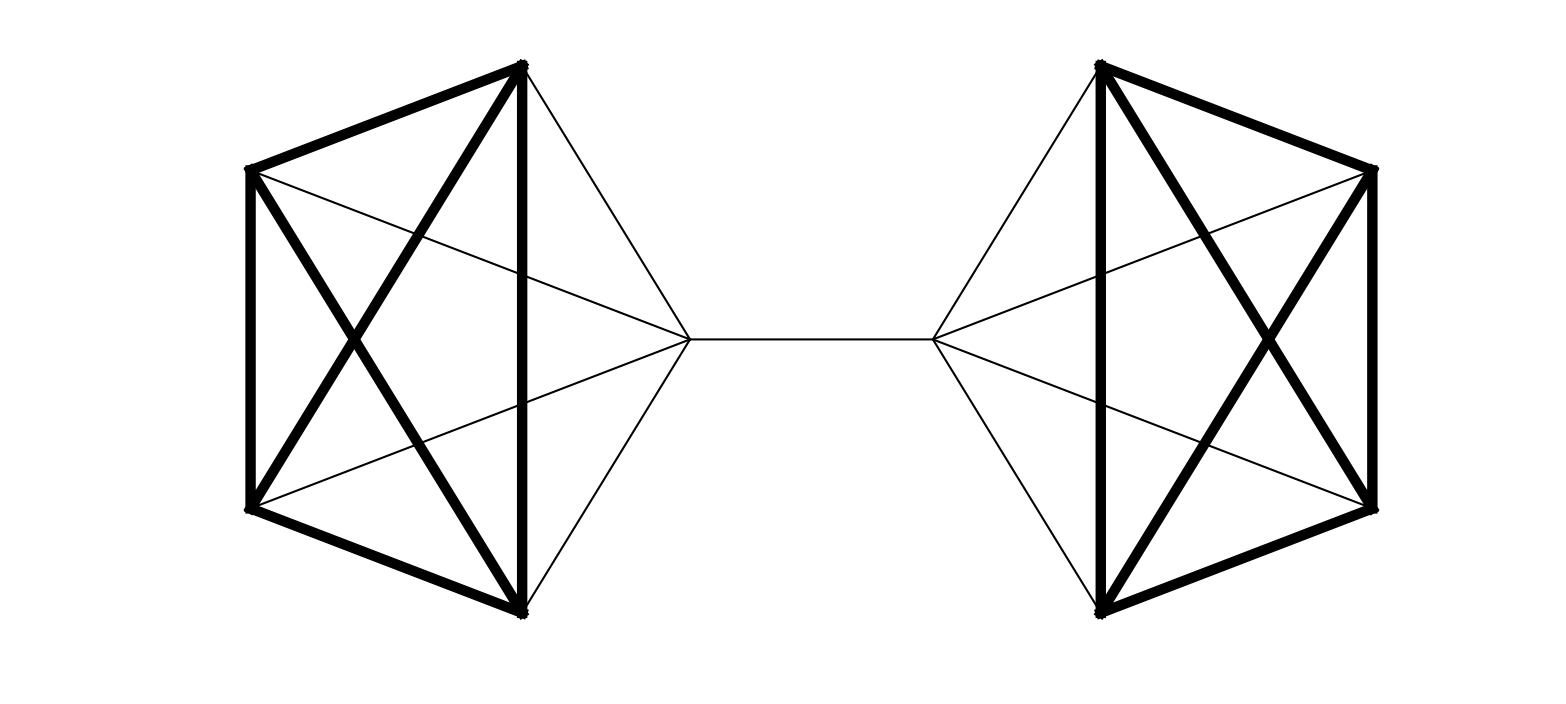}\\
\end{center}
\caption{From top to bottom:  $\tilde G_\kappa$ and $\tilde G_g$, which minimize the condition number and spectral gap, respectively.}\label{fig:clustergraphs}
\end{figure}

\begin{table}[htbp]
\begin{center}
\begin{tabular}{|c|| c|c|c|c|c|}
\hline
& $\lambda_1$&$\lambda_{N-1}$&$\kappa$&$\lambda_{N-1}-\lambda_1$\\\hline
$G$ & 0.2984 & 6.7016 & 22.4555 &  6.4031\\
$\tilde G_\kappa$&1.0000 & 17.9443 & {\bf 17.9443}&16.9443\\
$\tilde G_g$ & 0.0504 & 1.1542 & 22.8794 & {\bf 1.1038}\\\hline 
\end{tabular}
\end{center}
\caption{Comparison of condition number and spectral gap of the barbell graph, $G$, shown in Figure \ref{fig:completeclustercondition} and its rescaled versions, respectively.}\label{tab:clusterdata}
\end{table}

\end{example}

As discussed in the motivation of this section, reducing the condition number of a graph makes the graph more ``complete", that is, more like the complete graph in terms of its spectrum.  Since the algebraic connectivity $\lambda_1$ is as great as possible, it is the only graph for which $\lambda_1=\lambda_{N-1}$, the graph is the most connected a graph can possibly be, and as such the distance between any two points is minimal.  As previously discussed, the effective resistance is a natrual metric on graphs and one can compute that for any two distinct vertices, $i$ and $j$, on the complete graph on $N$ vertices we have
\begin{eqnarray*}
R(i,j)&=&\Sum_{k=1}^{N-1} \frac{1}{\lambda_k}\left( f_k(i)- f_k(j)\right)^2=\frac{1}{N}\Sum_{k=1}^{N-1}\left( f_k(i)- f_k(j)\right)^2\\
&=&\frac{1}{N}(e_i-e_j)^*  F F^* (e_i-e_j)=\frac{1}{N}(e_i-e_j)^*(e_i-e_j)\\
&=&\frac{1}{N}\norm{e_i-e_j}^2=\frac{2}{N}.
\end{eqnarray*}

\begin{conjecture}\label{conj:condresistance}
The process of conditioning a graph reduces the average resistance between any two vertices on the graph.
\end{conjecture}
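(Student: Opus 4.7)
The plan is to first recast the conjecture as a trace inequality. Summing the explicit formula for $R(i,j)$ over all pairs $i<j$, and invoking the orthonormality of the nontrivial Laplacian eigenvectors ($\|f_k\|^2=1$ and $\sum_i f_k(i)=0$ for $k\geq 1$), one obtains
$$\bar R(G) \;:=\; \frac{1}{\binom{N}{2}}\sum_{i<j} R(i,j) \;=\; \frac{2}{N-1}\sum_{k=1}^{N-1}\frac{1}{\lambda_k} \;=\; \frac{2}{N-1}\operatorname{tr}(L_0^{-1}).$$
Thus Conjecture~\ref{conj:condresistance} is equivalent to the spectral inequality $\operatorname{tr}(\tilde L_0^{-1}) \le \operatorname{tr}(L_0^{-1})$, where $\tilde L_0 = \tilde F^*BX^2B^*\tilde F$ is the reduced Laplacian produced by \url{GraphCondition} with optimal scaling $X^*$.

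I would then attack the trace inequality through a homotopy. Take the family of scalings $X(t) = (1-t)I + tX^*$, renormalized so that $M(t) := \tilde F^*BX(t)^2B^*\tilde F \succeq I$ holds for all $t \in [0,1]$, interpolating from the unscaled graph at $t=0$ to the conditioned graph at $t=1$. Differentiating yields
$$\frac{d}{dt}\operatorname{tr}\bigl(M(t)^{-1}\bigr) \;=\; -\operatorname{tr}\bigl(M(t)^{-1} M'(t) M(t)^{-1}\bigr),$$
so it would suffice to show that $M'(t)$ is positive semidefinite in a weighted sense along the path, after which the conjecture follows by integration from $0$ to $1$. In parallel, convexity suggests a majorization angle: since $x \mapsto 1/x$ is convex and strictly decreasing on $(0,\infty)$, the quantity $\sum 1/\lambda_k$ at fixed $\sum \lambda_k$ is Schur-convex and is minimized when the eigenvalues coincide. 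The conditioning algorithm drives the spectrum of $\tilde L_0$ precisely toward this concentrated state, anchored by $\lambda_{\min}(\tilde L_0) \ge 1$ and the minimization of $\lambda_{\max}$, and mirrors the tight complete-graph spectrum where the conjecture holds with equality at $\bar R(K_N) = 2/N$.

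The main obstacle is that conditioning does not preserve $\operatorname{tr}(\tilde L_0)$: shrinking $\lambda_{\max}$ raises its reciprocal, while lifting $\lambda_{\min}$ lowers the bottom reciprocal, and the sign of the net change in $\sum 1/\lambda_k$ requires careful bookkeeping. One route is to derive a lower bound on $\operatorname{tr}(\tilde L_0)$ directly from $\tilde L_0 \succeq I$, combine it with the upper bound on $\lambda_{\max}(\tilde L_0)$ afforded by the feasibility of the uniform rescaling $X = \lambda_{\min}(L_0)^{-1/2} I$, and then majorize termwise against the spectrum of $L_0$. Making this majorization rigorous, uniformly over all connected non-tight $G$, is where I expect any clean proof to succeed or fail; the extremes $G = K_N$ (trivial equality) and the regime of extremely ill-conditioned $G$ (inequality essentially strict) are useful sanity checks but do not by themselves imply the intermediate cases, so a global convexity argument along the SDP's feasible set, rather than pointwise spectral comparison, may ultimately be needed.
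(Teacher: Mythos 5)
This statement is a \emph{conjecture}: the paper offers no proof, only a heuristic (the midpoint-convexity discussion leading to the inequality on $\sum_k 1/\lambda_k$) and the explicit admission that ``we lack the theoretical justification.'' So there is no paper proof to match your attempt against; the question is only whether your proposal closes the gap, and it does not. That said, your opening reduction is correct and is genuinely sharper than the paper's own discussion: summing $R(i,j)=\sum_{k\ge 1}\lambda_k^{-1}(f_k(i)-f_k(j))^2$ over all pairs gives $\sum_{i,j}(f_k(i)-f_k(j))^2 = 2N\|f_k\|^2 - 2(\sum_i f_k(i))^2 = 2N$ for every orthonormal eigenbasis orthogonal to the constants, so the eigenvector term that the paper singles out as ``difficult to account for'' vanishes entirely after averaging, and the conjecture is exactly the trace inequality $\operatorname{tr}(\tilde L_0^{-1})\le \operatorname{tr}(L_0^{-1})$. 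That observation is worth keeping.

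Neither of your two routes to the trace inequality works as sketched. For the homotopy: $\frac{d}{dt}X(t)^2$ is the diagonal matrix with entries $2\bigl((1-t)+ts_i^*\bigr)(s_i^*-1)$, and the optimal scalings produced by \texttt{GraphCondition} sit on both sides of $1$ (in the barbell example the bridge edge gets weight $1.8473$ while its neighbors get $0.7389$), so $M'(t)$ is indefinite and $-\operatorname{tr}\bigl(M^{-1}M'M^{-1}\bigr)$ has no sign; the phrase ``positive semidefinite in a weighted sense'' is doing all the work and is not defined. For the majorization route: Schur-convexity of $\sum_k h(\lambda_k)$ requires the new spectrum to be \emph{majorized} by the old one, and a drop in condition number at fixed trace does not give majorization --- e.g.\ $(3,3,3,7)$ has smaller condition number and the same trace as $(1,5,5,5)$ but is not majorized by it, since $7>5$. (The target inequality happens to hold there anyway, which shows majorization is the wrong invariant, not merely an unproved one.) Finally, the normalization problem you flag is real and unresolved: \texttt{GraphCondition} constrains $\tilde L_0\succeq I$ rather than fixing the trace, and in the paper's barbell example the whole spectrum inflates by roughly a factor of three, so the raw inequality $\operatorname{tr}(\tilde L_0^{-1})\le\operatorname{tr}(L_0^{-1})$ could be won ``for free'' by rescaling unless you first pin down which normalization the conjecture intends. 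As it stands your proposal is an honest and partially useful reformulation, but the conjecture remains open after it, just as it is in the paper.
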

The intuition behind Conjecture \ref{conj:condresistance} can be motivated by studying the quantity $\sum_{k=1}^{N-1}1/{\lambda_k}$.  Consider a sequence of positive numbers $\{a_k\}_{k=1}^{N}$ with average $\bar a=1/N\sum_{k=1}^N a_k$.  Then since the function $h(t)=1/t$ is continous and convex on the set of positive numbers, it is also midpoint convex on that set, i.e.,
$$\frac{N}{\bar a}=N h(\bar a) \leq \Sum_{k=1}^N h(a_k) = \Sum_{k=1}^N \frac{1}{a_k}.$$
With this fact, let $\{\lambda_k\}_{k=1}^{N-1}$ denote the eigenvalues of connected graph $G$ and $\{\tilde\lambda_k\}_{k=0}^{N-1}$ denote the eigenvalues of the conditioned graph $\tilde G_\kappa$, both satisfying $\bar\lambda=1/N\sum_{k=1}^{N-1} \lambda_k=1/N\sum_{k=1}^{N-1} \tilde\lambda_k$.  Since $\tilde G_\kappa$ is better conditioned than $G$, then $\norm{\sum_{k=1}^{N-1}\tilde\lambda_k-\bar\lambda}\leq\norm{\sum_{k=1}^{N-1}\lambda_k-\bar\lambda}$.  In other words, the eigenvalues $\{\tilde\lambda_k\}_{k=1}^{N-1}$ are closer to the average $\bar\lambda$ than the eigenvaleus $\{\lambda_k\}_{k=1}^{N-1}$ are.  Hence 
\begin{equation}\Sum_{k=1}^{N-1}\frac{1}{\tilde\lambda_k} \leq\Sum_{k=1}^{N-1}\frac{1}{\lambda_k}.\label{suminveig}\end{equation}
Equation \eqref{suminveig} almost resembles the effective resistance $R(i,j)=\sum_{k=1}^{N-1}1/\lambda_k ( f_k(i)- f_k(j))^2$ except for the term $( f_k(i)- f_k(j))^2$.  This term will be difficult to account for since little is known about the eigenvectors of $\tilde G_\kappa$.  Analysis of eigenvectors of perturbed matrices is a widely open area of research and results are very limited, see \cite{Katobook, Tao2012, stewart1990matrix, CloningerThesis}.

We remark that Conjecture \ref{conj:condresistance} claims that conditoning a graph will reduce the average effective resistance between points; it is not true that the resistance between all points will be reduced.  If the weight on edge $(i,j)$ is reduced, then its effective resistance between points $i$ and $j$ is increased.  Since we impose that the trace of the Laplacians be preserved, if any edge weights are increased, then by conservation at least one other edge's weight must be decreased.  The vertex pairs for those edges will then have an increased effective resistance between them. 

While we lack the theoretical justification, numerical simulations support Conjecture \ref{conj:condresistance} and this is a source of future work.

The authors of \cite{boyd2008} approach a similar way.  They propose using convex optimization to minimize the total effective resistance of the graph,
$$R_{tot}=\Sum_{i,j=1}^N R(i,j).$$
They show that the optimization problem is related to the problem of reweighting edges to maximize the algebraic connectivity $\lambda_1$.

\section*{Acknowledgment}
Radu Balan was partially supported by NSF grant DMS-1413249 and ARO grant W911NF1610008. Matthew Begu\'e and Chae Clark would like to thank the Norbert Wiener Center
for Harmonic Analysis and Applications for its support during
this research. 
Kasso Okoudjou was partially supported by a grant from the Simons Foundation ($\# 319197$ to Kasso Okoudjou), and ARO grant W911NF1610008.

\bibliographystyle{amsplain}
\bibliography{BBCO_arx.bib}
\end{document}